\documentclass[12pt,bezier]{article}

\usepackage{amsmath, amssymb, amsfonts, amsthm,url}
\usepackage{xcolor,colortbl}
\usepackage{graphics}
\usepackage{array}
\usepackage{makecell}
\usepackage{tikz}
\usepackage{tkz-graph}
\usepackage{tkz-berge}
\usepackage{color}
\usepackage[utf8]{inputenc}
\usepackage{graphicx}
\usepackage{hyperref}
\usepackage{url}
\usepackage{amssymb,amsmath,amsthm,latexsym,tikz,url,float,subfig,caption,graphicx,pgfplots,mathrsfs,hyperref}
 \usetikzlibrary{matrix}
\tikzstyle{vertex}=[circle, draw, inner sep=0pt, minimum size=3pt]
\newcommand{\vertex}{\node[vertex]}
\usepackage{color}
\newtheorem{theorem}{Theorem}[section]
\newtheorem{lemma}[theorem]{Lemma}
\newtheorem{pro}[theorem]{Proposition}
\theoremstyle{definition}
\newtheorem{definition}[theorem]{Definition}
\newtheorem{example}[theorem]{Example}
\newtheorem{corollary}[theorem]{Corollary}

\newtheorem{claim}[theorem]{Claim}

\newtheorem{property}[theorem]{Property}

\newcommand{\BC}{\mathrm{BC}}
\newcommand{\Cay}{\mathrm{Cay}}
\newcommand{\dotcup}{\mathbin{\dot\cup}}

\begin{document}
\title{Subgroup Perfect Codes in Bi-Cayley Graphs}
\author{ Masoumeh Koohestani$^{ a}$
\quad Nasrin Soltankhah$^{ b,}$\thanks{Corresponding author} \quad
 Mohsen Ghasemi$^{ c}$ \\
\\[.3cm]
{\normalsize $^{ a,b}$Department of Mathematics, Alzahra University, }\\
{\normalsize Vanak Square 19834, Tehran, Iran}\\
{\normalsize $^{ c}$Department of Mathematics, Urmia University}\\
{\normalsize  P. O. Box 575615-1818, Urmia, Iran}\\
{ $^{ a}$m.koohestani@alzahra.ac.ir,} {  ${ ^b}$soltan@alzahra.ac.ir,}\\
{ $^{ c}$m.ghasemi@urmia.ac.ir, }
}
\maketitle
\begin{abstract} 
In this paper, we study perfect codes in bi-Cayley graphs, focusing on when two subgroups $H$ and $K$ of a group $G$ can form a perfect code of the form $H_0\cup K_1$. We first consider the problem for a general group $G$ and then examine what happens for special classes of groups, particularly abelian groups. Along the way, we obtain several results, including a necessary and sufficient condition for normal subgroups to form a perfect code, as well as an equivalent condition for abelian groups in terms of their Sylow \(2\)-subgroups. Finally, we study the same problem for dihedral groups, providing results for this important class of non-abelian groups.

\end{abstract}
\noindent{\bf Keywords:} abelian group, bi-Cayley graph, normal subgroup, subgroup perfect code. \\
\noindent{\bf AMS Mathematics Subject Classification\,(2020):}   05C25, 05C69, 94B99
\section{Introduction}
Throughout this paper, all graphs and groups are finite, with graphs assumed to be undirected.

Let $G$ be a finite group with identity element $1_G$.
Let $R$, $L$, and $S$ be subsets of $G$ such that $R=R^{-1}$ and $L=L^{-1}$, and $1_G\notin R\cup L$. The {\em bi-Cayley} graph $\BC(G; R, L, S)$ is a graph with vertex set $G_0\cup G_1$, where 
$$G_0=\{g_0~|~ g\in G\}, ~~G_1=\{g_1~|~ g\in G\}$$
and two vertices $g_i$ and $h_j$ where $g, h \in G$ and $i,j \in \{0,1\}$, are adjacent if and only if one of the following conditions holds:
\begin{itemize}
  \item[(1)] $i=j=0$ and $hg^{-1}\in R$;
  \item [(2)] $i=j=1$ and $hg^{-1}\in L$;
  \item [(3)] $i=0,j=1$ and $hg^{-1}\in S$.
\end{itemize}
It should be noted that in the literature, the term {\em semi-Cayley} graph is sometimes used in place of bi-Cayley graph. Bi-Cayley graphs (or semi-Cayley graphs) have been widely studied, see for example \cite{Conder-2020, Feng-2016}. In contrast, perfect codes in bi-Cayley graphs have received relatively little attention to date.

Let $\Gamma$  be a graph with vertex set $V (\Gamma)$ and edge set $E(\Gamma)$.  A \textit{perfect code}
 in $\Gamma$ is a subset $C$ of $V(\Gamma)$ such that no two vertices in $C$ are adjacent and every vertex in $V(\Gamma)\setminus C$ is adjacent to exactly one vertex in $C$. In graph theory, perfect codes are also referred to as \textit{efficient dominating sets} \cite{Dejter-2003,Knor-2012} or  \textit{independent perfect dominating sets} \cite{Lee-2001}.
 The systematic study of perfect codes began with Biggs in 1973 \cite{Biggs-1973}, who considered their existence in distance-transitive graphs. Since then, extensive research has addressed perfect codes. Among other graphs, Cayley  and Cayley sum graphs have received significant attention \cite{Chen-2020, Koohestani-2026, Wang-2024, Zhang-2024, Zhou-2016}. In contrast, despite their structural resemblance to Cayley graphs, the  perfect codes of bi-Cayley graphs remain comparatively unexplored and only limited results are known for perfect codes in these graphs up to now \cite{Koohestani-2026-3, Wang-2021, Wang-2026}.

Let $G$ be a finite group and let $H,K\subseteq G$. The set $H_0\cup K_1$ is called a {\em perfect code} of $G$ if it occurs as a perfect code in some bi-Cayley graph over $G$. In the special case where both $H$ and $K$ are subgroups of $G$, we call $H_0\cup K_1$ a {\em subgroup perfect code} of $G$. When $H=K$, we simply refer to $H$ as a {\em subgroup perfect code}. To illustrate this terminology, see Figure~\ref{FigD6}, where $\{1,b\}$ is a subgroup perfect code of $D_6$.

\begin{figure}[t]
	\centering
	\begin{tikzpicture}[scale=.8]
		\vertex[fill] at (2,0) [label=above:\scriptsize{$(a^2b)_0$}] {};
		\vertex[fill]  at (4,0) [label=above:\scriptsize{$(a)_0$}] {};
		\vertex[fill]  at (6,0) [label=above:\scriptsize{$(ab)_0$}] {};
		\vertex[fill] at (8,0) [label=above:\scriptsize{$(a^2)_0$}] {};
		\vertex[fill] at (2,-2) [label=below:\scriptsize{$(a^2b)_1$}] {};
		\vertex[fill] at (4,-2) [label=below:\scriptsize{$(a)_1$}] {};
		\vertex[fill] at (6,-2) [label=below:\scriptsize{$(ab)_1$}] {};
		\vertex[fill] at (8,-2) [label=below:\scriptsize{$(a^2)_1$}] {};
\tikzstyle{vertex}=[circle, draw, blue, inner sep=0pt, minimum size=4pt]
 \vertex[fill] at (0,0) [label=above:\scriptsize{$(1)_0$}] {};
 \vertex[fill]  at (10,0) [label=above:\scriptsize{$(b)_0$}] {};
 \vertex[fill]  at (0,-2) [label=below:\scriptsize{$(1)_1$}] {};
 \vertex[fill]  at (10,-2) [label=below:\scriptsize{$(b)_1$}] {};
\draw[line width=1pt]  (0,0) --  (2,0);
\draw[line width=1pt]  (4,0) --  (6,0);
\draw[line width=1pt]  (8,0) --  (10,0);
\draw[line width=1pt]  (0,-2) --  (2,-2);
\draw[line width=1pt]  (4,-2) --  (6,-2);
\draw[line width=1pt]  (8,-2) --  (10,-2);
\draw[line width=1pt]  (0,0) --  (6,-2);
\draw[line width=1pt]  (2,0) --  (8,-2);
\draw[line width=1pt]  (4,0) --  (10,-2);
\draw[line width=1pt]  (6,0)--(0,-2);
\draw[line width=1pt]  (8,0)--(2,-2);
\draw[line width=1pt]  (10,0)--(4,-2);
\end{tikzpicture}
\caption{$\BC(D_6,\{a^2b\}, \{a^2b\}, \{ab\})$}\label{FigD6}
\end{figure}
The paper is organized as follows. In Section~\ref{Prel}, we introduce the notation and recall the basic definitions and results on bi-Cayley graphs that we will need later. In Section~\ref{Subgroup}, we study when two normal subgroups of a group $G$ can form a perfect code in a bi-Cayley graph over $G$. We also give a necessary and sufficient condition for their existence in terms of left transversals. Section~\ref{Abeli} focuses on subgroup perfect codes in abelian groups, particularly on the connection between their existence in an abelian group and in its Sylow $2$-subgroup. In Section~\ref{Dihedral}, we construct several subgroup perfect codes in dihedral groups. Finally, Section~\ref{Conclude} concludes the paper by summarizing our main results and discussing some possible directions for future research.

 \section{Preliminaries}\label{Prel}
Let $\Gamma$  be a graph with vertex set $V (\Gamma)$ and edge set $E(\Gamma)$. We write $u\sim v$, where $u,v\in V(\Gamma)$, to indicate that $u$ and $v$ are adjacent. 
 We use $\Gamma (u)$ 
 for the neighbourhood of a vertex $u\in V(\Gamma)$, that is, the set of vertices adjacent to $u$ in $\Gamma$. We  also use $\dotcup_{i=0}^nS_i$ to denote the union of the pair-wise disjoint sets $S_1, S_2, ..., S_n$.
 
Suppose that $G$ is a finite group. For any subsets $U, V\subseteq G$, define
$$UV=\{uv~|~u\in U,~ v\in V\}$$
whenever both $U$ and $V$ are nonempty, and $UV=\emptyset$ when either $U$ or $V$ is empty. In particular, if $U=\{u\}$ or $V=\{v\}$, then we write $uV$ or $Uv$, respectively. Evidently, we have $UV\subseteq G$ and 
$UV=\cup _{u\in U}uV$.
This union is not necessarily disjoint. We therefore define
$$U*V:=\dotcup_{_{u\in U}}uV.$$
So for a subgroup $H$ of $G$ we have $UH=U*H$ if and only if $U$ is a left transversal of $H$ in $UH$. We note that the left transversal of $H$ in $UH$ is defined in the same way as the left transversal of $H$ in $G$. Throughout this paper, we use the notation $\dotcup$ whenever it is necessary to indicate that the union is disjoint.  

Suppose that $\Gamma=\BC(G;R, L, S)$ is a bi-Cayley graph.  The neighborhood of vertices $x_0, x_1\in V(\Gamma)$ are obtained as follows: 
   \begin{align}\label{Neighbor}
     \Gamma(x_0)&=\{(rx)_0, (sx)_1~|~ r\in R, s\in S\}=(Rx)_0\cup(Sx)_1, \nonumber\\
      \Gamma(x_1)&= \{(lx)_1, (s^{-1}x)_0~|~ l\in L, s\in S\}=(Lx)_1\cup(S^{-1}x)_0.
   \end{align}
The next proposition follows directly from the definition of perfect codes and so, we omit the proof.
\begin{pro}\label{Partition}
Let $\Gamma$ be a graph and $C$ be a subset of the vertices of $\Gamma$. If $C$ is a  perfect code of $\Gamma$, then the $|C|+1$ sets $\{\Gamma(c)~|~ c\in C\}$ and $C$ constitute a partition  of $V(\Gamma)$.
\end{pro}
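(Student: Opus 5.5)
The plan is to check the two defining properties of a partition directly: the listed sets cover $V(\Gamma)$, and they are pairwise disjoint. Throughout, $\Gamma$ is a simple graph, so no vertex lies in its own neighbourhood.

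\emph{Covering.} Let $v\in V(\Gamma)$. If $v\in C$, it lies in the set $C$. Otherwise $v\in V(\Gamma)\setminus C$, and by the definition of a perfect code $v$ is adjacent to some $c\in C$, so $v\in\Gamma(c)$. Hence
\[
C\cup\bigcup_{c\in C}\Gamma(c)=V(\Gamma).
\]

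\emph{Disjointness of $C$ and each $\Gamma(c)$.} Suppose $v\in C\cap\Gamma(c)$ for some $c\in C$. Then $v\sim c$. Since there are no loops, $v\neq c$, so $v$ and $c$ are two adjacent vertices of $C$. This contradicts the independence of $C$.

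\emph{Disjointness of $\Gamma(c)$ and $\Gamma(c')$.} Take distinct $c,c'\in C$ and suppose $v\in\Gamma(c)\cap\Gamma(c')$. By the previous step, $v\notin C$. But then $v$ is a vertex of $V(\Gamma)\setminus C$ adjacent to two distinct vertices of $C$. This contradicts the requirement that such a vertex is adjacent to \emph{exactly one} vertex of $C$.

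One point needs a remark. If some $\Gamma(c)$ is empty (an isolated vertex of $C$), the family of $|C|+1$ sets contains an empty block. So "partition" should be read as "these sets are pairwise disjoint and their union is $V(\Gamma)$", which is exactly what the steps above establish. The same reading covers the degenerate case $C=V(\Gamma)$.

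There is no real obstacle: every step is an immediate unwinding of the definition of a perfect code.
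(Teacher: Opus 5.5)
Your proof is correct and is exactly the direct unwinding of the definition that the paper has in mind; the paper omits the proof, saying the proposition follows directly from the definition of a perfect code. Your remark that ``partition'' must allow empty blocks (isolated vertices of $C$) is a reasonable reading of the statement.
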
 
Applying \eqref{Neighbor} and Proposition \ref{Partition} to a perfect code $C=H_0 \cup K_1$ in the bi-Cayley graph $\Gamma=\BC(G;R, L, S)$ yields 
\begin{align}\label{Vertex-Perfect}
  V(\Gamma)&=(R*H)_0 \dotcup (S*H)_1 \dotcup (L*K)_1 \dotcup (S^{-1}*K)_0 \dotcup H_0 \dotcup K_1.
\end{align}
The relationship between perfect codes in bi-Cayley graphs and left transversals of a group follows from \eqref{Vertex-Perfect}, analogously to that for Cayley graphs \cite{Huang-2018}. A detailed proof of this relationship is provided in \cite{Weiyue-2026}. 
\begin{lemma}\cite{Weiyue-2026}\label{LeftTrans-Lem}
Let $\Gamma=\BC(G;R, L, S)$ be a bi-Cayley graph and $H$ and $K$ be  subgroups of $G$. Then, $H_0\cup K_1$ is a subgroup perfect code of $G$ if and only if $R\cup \{1_G\}$ is a left transversal of $H$ in $G\setminus S^{-1}*K$ and $L\cup \{1_G\}$ is a left transversal of $K$ in $G\setminus S*H$.
  \end{lemma}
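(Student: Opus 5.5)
The plan is to prove both directions by reading off the partition \eqref{Vertex-Perfect} one layer at a time. Adjacency in $\Gamma=\BC(G;R,L,S)$ never identifies a vertex of $G_0$ with one of $G_1$, so a partition of $V(\Gamma)=G_0\dotcup G_1$ is the same thing as a partition of each layer separately. I will first restate the perfect-code condition for $C=H_0\cup K_1$ as the pair of disjoint-union identities
\begin{align*}
G&=(R*H)\dotcup(S^{-1}*K)\dotcup H,\\
G&=(L*K)\dotcup(S*H)\dotcup K.
\end{align*}
Throughout, $(R*H)\dotcup H=(R\cup\{1_G\})*H$, using $1_G\notin R$, and similarly $(L*K)\dotcup K=(L\cup\{1_G\})*K$.

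For the forward direction, I take $C$ to be a perfect code. Proposition~\ref{Partition} together with \eqref{Neighbor} then gives \eqref{Vertex-Perfect}. Intersecting it with $G_0$ yields the first identity. Hence $G\setminus(S^{-1}*K)$ equals $(R\cup\{1_G\})*H$. This says exactly that the cosets $rH$, $r\in R\cup\{1_G\}$, are pairwise disjoint and cover $G\setminus S^{-1}*K$. In other words, $R\cup\{1_G\}$ is a left transversal of $H$ in that set. The same argument on $G_1$ gives the statement for $L\cup\{1_G\}$ and $K$ in $G\setminus S*H$.

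For the converse, I will verify the two defining properties of a perfect code directly.
\begin{itemize}
\item[(a)] \emph{Independence of $C$.} Suppose $h_0\sim h'_0$ with $h,h'\in H$. Then $h'h^{-1}\in R\cap H$. But $R\cap H=\emptyset$, since $R\cup\{1_G\}$ is a transversal and $1_G$ already represents $H$. Likewise $K\cap L=\emptyset$. Suppose $h_0\sim k_1$ with $h\in H$, $k\in K$. Then $k\in Sh\subseteq S*H$, which contradicts $K=1_G K\subseteq G\setminus S*H$.
\item[(b)] \emph{Unique domination.} Fix $x_0\notin C$. Its neighbours in $C$ correspond to the ways of writing $x=rh$ with $r\in R$, $h\in H$, or $x=s^{-1}k$ with $s\in S$, $k\in K$.
\end{itemize}
For (b), first suppose $x\in S^{-1}*K$. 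Disjointness of that union gives exactly one representation $s^{-1}k$. Moreover $x\notin RH$, because $RH\subseteq G\setminus S^{-1}*K$. Otherwise $x\notin S^{-1}*K$, so $x$ lies in exactly one coset $rH$ with $r\in R\cup\{1_G\}$, and $r\neq 1_G$ because $x\notin H$. Since $h$ is determined by $r$, this gives exactly one neighbour. The layer $G_1$ is handled symmetrically.

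The main obstacle is interpretive rather than computational. The hypothesis that $R\cup\{1_G\}$ is a transversal ``in $G\setminus S^{-1}*K$'' implicitly requires that this set is a union of left $H$-cosets and that $S^{-1}*K$ is a genuinely disjoint union. I will make explicit that the notation $S^{-1}*K$ asserts disjointness, meaning $S^{-1}$ is a left transversal of $K$ in $S^{-1}K$. Uniqueness of $s$ in the domination count depends on this.
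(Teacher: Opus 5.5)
Your proof is correct and follows the route the paper indicates. The paper gives no argument of its own beyond remarking that the lemma follows from the partition \eqref{Vertex-Perfect} and deferring details to the cited reference; you read that partition off layer by layer and reverse it by checking independence and unique domination directly. Your point that the converse depends on the disjointness built into the notation $S^{-1}*K$ and $S*H$ is a real one that the paper leaves implicit. Without it the converse fails: take $G=\mathbb{Z}_2\times\mathbb{Z}_2$, $H=K$ of order $2$, $S=G\setminus H$ and $R=L=\emptyset$. Then the coset-covering condition holds, but a vertex $x_0$ with $x\notin H$ has two neighbours in $H_0\cup H_1$.
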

  The following result is an immediate consequence of Lemma \ref{LeftTransversal}.
  \begin{corollary}\label{LeftTransversal}
    Let $\Gamma=\BC(G;R, L, S)$ be a bi-Cayley graph and $H$ be a subgroup of $G$. Then, $H$ is a subgroup perfect code of $G$ if and only if both $R\cup S^{-1} \cup \{1_G\}$ and $L\cup S \cup \{1_G\}$ are left transversals of $H$ in $G$.
      \end{corollary}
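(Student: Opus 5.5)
We derive Corollary~\ref{LeftTransversal} from Lemma~\ref{LeftTrans-Lem} with $K=H$, by showing that each left transversal condition there is equivalent to the corresponding one in the corollary. Note that $L\cup S\cup\{1_G\}$ is literally a union of sets, so the point is to match it with $L\cup\{1_G\}$ being a transversal of $H$ in $G\setminus S*H$. We first make explicit the conventions under which this is meant: $R\cap S^{-1}=\emptyset$, $1_G\notin S$, and "left transversal of $H$ in $G$" for the three-part union means the union is disjoint and meets each left coset $gH$ exactly once.

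For the forward direction, assume $H_0\cup H_1$ is a perfect code of $\Gamma$. By the decomposition \eqref{Vertex-Perfect} with $K=H$, the $G_1$-part gives
$$G=(S*H)\dotcup(L*H)\dotcup H,$$
and the $G_0$-part gives $G=(R*H)\dotcup(S^{-1}*H)\dotcup H$. In the first equation every piece is a disjoint union of full left cosets $xH$ ($x\in S$, $x\in L$, and $x=1_G$). Moreover, distinct elements of $S\cup L\cup\{1_G\}$ yield disjoint cosets, and together these cosets cover $G$. Hence $S\cup L\cup\{1_G\}$ contains exactly one representative of each left coset of $H$, i.e.\ it is a left transversal of $H$ in $G$. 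The same argument applied to the $G_0$-part shows that $R\cup S^{-1}\cup\{1_G\}$ is a left transversal. The one subtle point is that the cosets $xH$ and $yH$ for distinct representatives must be genuinely disjoint, not merely distinct as labels. This follows because the union in \eqref{Vertex-Perfect} is disjoint, and because inside $S*H$ the notation $*$ already records disjointness.

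For the converse, assume both unions are left transversals of $H$. Then
$$G=\dotcup_{x\in R\cup S^{-1}\cup\{1_G\}}xH=(R*H)\dotcup(S^{-1}*H)\dotcup H,$$
so $G\setminus S^{-1}*H=(R\cup\{1_G\})*H$. Thus $R\cup\{1_G\}$ is a left transversal of $H$ in $G\setminus S^{-1}*H$, and symmetrically $L\cup\{1_G\}$ is a left transversal of $H$ in $G\setminus S*H$. Lemma~\ref{LeftTrans-Lem} with $K=H$ then gives that $H_0\cup H_1$ is a perfect code. The same computation, read backwards, recovers the forward direction directly from the lemma, since a transversal of $H$ in $G\setminus S*H$ together with the transversal $S$ of $H$ in $S*H$ is a transversal of $H$ in $G$.

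The main obstacle is bookkeeping rather than mathematics. We must make sure that $S*H$ is well defined, meaning $S$ is a transversal in $SH$; this is implicit in Lemma~\ref{LeftTrans-Lem}, since $S*H$ occurs in \eqref{Vertex-Perfect}. We must also check that the left cosets coming from different parts of the union never overlap. Both facts come from the disjointness in \eqref{Vertex-Perfect}, so the proof reduces to the coset-union identity above.
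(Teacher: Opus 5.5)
Your proposal is correct and is essentially the paper's argument: the paper states the corollary as an immediate consequence of Lemma~\ref{LeftTrans-Lem} with $K=H$. Your step of combining the transversal $R\cup\{1_G\}$ of $H$ in $G\setminus S^{-1}*H$ with $S^{-1}$ (and $L\cup\{1_G\}$ with $S$) is exactly that specialization, and your insistence on reading the unions as disjoint is genuinely needed, since with plain set unions the ``if'' direction fails (take $G=\{1_G,g\}$ of order $2$, $H=\{1_G\}$, $R=L=S=\{g\}$: then $g_0$ is adjacent to both $(1_G)_0$ and $(1_G)_1$).
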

 
\section{Subgroup Perfect Codes}\label{Subgroup}
In order to investigate subgroup perfect codes of finite groups, we also collect several facts and observations that facilitate the study the structure of bi-Cayley graphs.  Throughout this section, $G$ is a finite group, $|G|$ is the cardinality of $G$ and  $o(g)$ denotes the order of $g$, where $g\in G$.
\begin{claim}\label{SInterKH}
Let $H$ and $K$ be subgroups of $G$. If $H_0\cup K_1$ is a perfect code in a bi-Cayley graph $\BC(G;R,L,S)$ over $G$, then
$S\cap KH=\emptyset$.
 \end{claim}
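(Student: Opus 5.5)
Suppose, for contradiction, that some $s\in S$ lies in $KH$, say $s=kh$ with $k\in K$ and $h\in H$. The plan is to find a vertex of $\BC(G;R,L,S)$ that is either adjacent to two distinct code vertices, or is a code vertex adjacent to another code vertex. Both outcomes violate the definition of a perfect code, or equivalently the disjointness in the partition \eqref{Vertex-Perfect} coming from Proposition~\ref{Partition}.

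The natural candidate is an edge between $H_0$ and $K_1$. By \eqref{Neighbor}, the vertex $x_0$ is adjacent to $(sx)_1$ for every $s\in S$. Take $x=h^{-1}\in H$. Then $h^{-1}_0\in H_0$ is adjacent to $(sh^{-1})_1=(khh^{-1})_1=k_1\in K_1$. So two vertices of $C=H_0\cup K_1$ are adjacent, contradicting the independence of a perfect code.

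Phrased through \eqref{Vertex-Perfect}, the same step reads: $S*H$ contains $sh^{-1}=k\in K$. Hence $(S*H)_1\cap K_1\neq\emptyset$, which contradicts the disjointness of the union in \eqref{Vertex-Perfect}. Either phrasing gives $S\cap KH=\emptyset$.

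There is no real obstacle here. The only point to check is the order of multiplication: adjacency $g_0\sim h_1$ is defined by $hg^{-1}\in S$, and this matches the product $KH$ (rather than $HK$) when we choose $x=h^{-1}$ as above.
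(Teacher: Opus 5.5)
Your proof is correct and is essentially the paper's argument. From $s=kh\in S\cap KH$ you get $sh^{-1}=k$, so the code vertices $(h^{-1})_0$ and $k_1$ are adjacent, which contradicts the independence of $H_0\cup K_1$; your rephrasing through the disjointness of $(S*H)_1$ and $K_1$ is the same fact.
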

\begin{proof}
  Suppose, to the contrary, that there exists an element $s\in S\cap KH$, then $s=kh$ for some $k\in K $ and $h\in H$. Hence, $sh^{-1}=k$ which implies that $(h^{-1})_0\sim k_1$, contradicting with the independency of $H_0\cup K_1$. 
\end{proof}
An immediate consequence of Claim \ref{SInterKH} is that if $G=KH$, where $H$ and $K$ are subgroups of $G$, then neither of $H_0\cup K_1$ nor $H_1\cup K_0$ can be perfect code of any connected bi-Cayley graph, since $S$ must be empty. 
\begin{definition}\label{S-S-Inverse}
Let $H$ and $K$ be subgroups of $G$. We define the subset $\mathcal{S}$ as follows:
$$\mathcal{S}=\{S\subseteq G~|~ S\cap KH=\emptyset,~  S^{-1}K=S^{-1}*K,~ SH=S*H\}.$$ 
Note that $\emptyset \in \mathcal{S}$, and so $\mathcal{S}$ is  nonempty.
\end{definition}
In view of Definition \ref{S-S-Inverse}, let $S\in\mathcal{S}$ be fixed for two given subgroups $H$ and $K$ of $G$. Suppose that there exist inverse-closed subsets $R,L\subseteq G$ such that
\begin{equation}\label{Construction}
R*H \cup S^{-1}*K\cup H=L*K \cup S*H\cup K=G.
\end{equation}
Then, by Lemma \ref{LeftTrans-Lem}, $H_0\cup K_1$ is a subgroup perfect code of $G$
 The following example illustrates this construction.
\begin{example}
Consider the dihedral group
$D_{12}=\langle a,b\mid a^6=b^2=1, ab=ba^{-1}\rangle$
and let $H=\langle b\rangle$ and $K=\langle a^3b\rangle$ be subgroups of $D_{12}$. Set $S=\{a,a^2,a^4,a^5\}$.
It is readily verified that $S\in\mathcal{S}$. Set $R=\{a^3b\}$ and $L=\{a^3\}$. Since \eqref{Construction} holds for $\BC(D_{12};R,L,S)$, it follows that $H_0\cup K_1$ is a subgroup perfect code in $D_{12}$.
\end{example} 
When subgroups $H$ and $K$ are normal in $G$, we prove that the existence of inverse-closed subsets $R$ and $L$ for a given subset $S\in \mathcal{S}$ satisfying \eqref{Construction} is equivalent to the following property.
\begin{property}\label{Property}
Let $H$ and $K$ be  normal subgroups of $G$. Suppose that $S\in \mathcal{S}$, and set $C=H_0 \cup K_1$. For any $g_i\in G_0\cup G_1\setminus (S^{-1}*K)_0\cup (S*H)_1$, if $(g^2)_i\in C$, then there exists some $x_i\in C$ such that $(gx)^2=1_G$. In another words,
\begin{equation*}
  \forall g_i\in (G_0\cup G_1)\setminus ((S^{-1}*K)_0 \cup(S*H)_1),  ~~ (g^2)_i\in C~\Rightarrow~ \exists x_i\in C: ~~ (gx)^2=1_G .
\end{equation*}
\end{property}
\begin{theorem}\label{Perfect-Property} 
Let $H$ and $K$ be a normal subgroups of $G$.
Then, $H_0\cup K_1$ is a subgroup perfect code of $G$ if and only if the property \eqref{Property} holds.
\end{theorem}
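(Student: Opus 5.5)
The plan is to reduce everything to Lemma~\ref{LeftTrans-Lem} and then work in the quotient groups $G/H$ and $G/K$, which exist because $H$ and $K$ are normal. I read the theorem as: $H_0\cup K_1$ is a perfect code of some $\BC(G;R,L,S)$ if and only if there is an $S\in\mathcal{S}$ for which Property~\ref{Property} holds. By Lemma~\ref{LeftTrans-Lem}, a perfect code with connection sets $(R,L,S)$ means exactly the following:
\begin{itemize}
\item $S\in\mathcal{S}$ (the disjointness conditions and Claim~\ref{SInterKH});
\item $R\cup\{1_G\}$ is a left transversal of $H$ in $X_0:=G\setminus S^{-1}*K$;
\item $L\cup\{1_G\}$ is a left transversal of $K$ in $X_1:=G\setminus S*H$.
\end{itemize}
The two coordinates are symmetric: swap $H\leftrightarrow K$ and $S\leftrightarrow S^{-1}$. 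So I will treat the index $i=0$, where $C$ contributes $H$, and the case $i=1$ follows verbatim.

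\emph{Reformulation in the quotient.} Fix $S\in\mathcal{S}$. The task is to decide when an inverse-closed $R\subseteq G\setminus\{1_G\}$ exists with $G=H\dotcup R*H\dotcup S^{-1}*K$. A left transversal of $H$ in $X_0$ exists only if $X_0$ is a union of left $H$-cosets, and I take this as part of the standing setup (it is implicit in \eqref{Construction}). Since $H\trianglelefteq G$, inversion maps each coset $gH$ onto the coset $g^{-1}H=Hg^{-1}$. Hence the set of cosets $\{gH\subseteq X_0,\ gH\neq H\}$ is closed under $gH\mapsto g^{-1}H$, because $X_0$ is inverse-closed: $(S^{-1}K)^{-1}=KS=SK$ up to the normality of $K$. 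The cosets then split into two kinds:
\begin{itemize}
\item pairs $\{gH,g^{-1}H\}$ of distinct cosets, which happens exactly when $g^2\notin H$;
\item self-inverse cosets, which happens exactly when $g^2\in H$, that is, $(g^2)_0\in C$.
\end{itemize}

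\emph{Sufficiency.} For each pair of distinct cosets, choose any $r\in gH$ and put both $r$ and $r^{-1}\in g^{-1}H$ into $R$. For each self-inverse coset $gH\neq H$, Property~\ref{Property} gives $x\in H$ with $(gx)^2=1_G$. Then $gx$ is an involution (or the identity, which is excluded since $gH\neq H$), and I put $r=gx$ into $R$, noting $r^{-1}=r$. The resulting $R$ is inverse-closed, avoids $1_G$, and meets each coset of $X_0\setminus H$ exactly once. So $R\cup\{1_G\}$ is a left transversal of $H$ in $X_0$. The same construction with $K$ gives $L$, and Lemma~\ref{LeftTrans-Lem} finishes this direction.

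\emph{Necessity.} Suppose $R$ exists, and let $g\notin S^{-1}*K$ with $g^2\in H$. If $g\in H$, take $x=g^{-1}\in H$. Otherwise $gH$ is a self-inverse coset contained in $X_0$, so it contains a unique $r\in R$. Since $r^{-1}\in R\cap g^{-1}H=R\cap gH$, uniqueness forces $r^{-1}=r$. Writing $r=gx$ with $x\in H$ gives $(gx)^2=1_G$, as Property~\ref{Property} requires.

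The main obstacle is bookkeeping rather than depth. One must check that inverse-closedness of $X_0$ really follows from $S\in\mathcal{S}$ together with the normality of $K$, and that $X_0$ is a union of $H$-cosets. If the latter is not automatic from Definition~\ref{S-S-Inverse}, I would add it explicitly as part of the hypothesis on $S$, since otherwise neither side of the equivalence can hold.
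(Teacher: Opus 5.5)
Your necessity argument is correct and is essentially the paper's. Your sufficiency construction (pair each $gH$ with $g^{-1}H$, and use Property~\ref{Property} to pick an involution in each self-inverse coset) is also the paper's.

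\textbf{The gap.} The genuine gap is your claim that $X_0=G\setminus S^{-1}*K$ is inverse-closed. What you actually computed is $(S^{-1}K)^{-1}=KS=SK$. But $SK\neq S^{-1}K$ in general, since nothing in Definition~\ref{S-S-Inverse} forces $S$ and $S^{-1}$ to meet the same $K$-cosets. This is not bookkeeping, because inverse-closedness of $X_0$ is necessary for $R$ to exist: if $R=R^{-1}$ and $X_0=(R\cup\{1_G\})H$ with $H$ normal, then $X_0^{-1}=H(R\cup\{1_G\})=X_0$.

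Concretely, take $G=\langle a\rangle\cong\mathbb{Z}_3$, $H=K=\{1_G\}$ and $S=\{a\}$. Then $S\in\mathcal{S}$, Property~\ref{Property} holds, and $X_0=\{1_G,a\}$ is trivially a union of $H$-cosets. Yet no inverse-closed $R$ exists: your step ``put both $r$ and $r^{-1}$ into $R$'' places $r^{-1}=a^2$ inside $S^{-1}*K$. So for a fixed $S$ you must also assume $SK=S^{-1}K$ and $SH=S^{-1}H$; these are exactly the conditions for $X_0$ and $X_1$ to be inverse-closed.

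\textbf{The theorem as stated is false.} Your closing remark that without the coset-union condition ``neither side of the equivalence can hold'' is wrong. Property~\ref{Property} can hold while that condition fails, and this makes the theorem false even in your existential reading.

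Take $G=\langle a\rangle\times\langle c\rangle$ with $o(a)=4$, $o(c)=3$, $H=\langle a^2\rangle$, $K=\langle c\rangle$ and $S=\{a^3,ac\}$. One checks directly that $S\in\mathcal{S}$. The property holds:
\begin{itemize}
\item the $g\notin S^{-1}K=\{a^{\pm1}c^j\}$ with $g^2\in H$ are just $1_G$ and $a^2$;
\item the $g$ with $g^2\in K$ are the $a^{2\epsilon}c^j$, and $x=c^{-j}\in K$ works for them.
\end{itemize}
However, $|G\setminus SH|=8$ is not divisible by $|K|=3$, so no $L$ exists for this $S$. Worse, for any $\BC(G;R,L,S')$ the partition \eqref{Vertex-Perfect} gives $12=2|R|+3|S'|+2=3|L|+2|S'|+3$. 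This forces $S'=\emptyset$, and then the coset $\{a,a^3\}$ would have to contain exactly one element of the inverse-closed set $R$, which is impossible. So $H_0\cup K_1$ is not a perfect code of any bi-Cayley graph over $G$.

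\textbf{Relation to the paper, and the fix.} The paper's sufficiency proof makes the same two tacit assumptions. It adjoins $y_i^{-1}$ without checking that $y_i^{-1}H$ avoids $S^{-1}*K$, and it simply asserts that the resulting set is a transversal of $G\setminus S^{-1}*K$. Your argument becomes a complete proof of the corrected fixed-$S$ statement once you add four hypotheses, all of which are necessary for $R$ and $L$ to exist with this $S$:
\begin{itemize}
\item $S^{-1}K$ is a union of $H$-cosets;
\item $SH$ is a union of $K$-cosets;
\item $SK=S^{-1}K$;
\item $SH=S^{-1}H$.
\end{itemize}
In the existential form, the two coset-union conditions alone suffice. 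The reason is that, for $H$ (resp.\ $K$) normal, the set of $g$ with $g^2\in H$ (resp.\ $K$) admitting no suitable $x$ is inverse-closed. Hence one may replace $S$ by $S\cap(SN\cap S^{-1}N)$ with $N=HK$. That shrinking step is not in your proof.
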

\begin{proof}
Let $S\in \mathcal{S}$ and $C=H_0\cup K_1$ be a subgroup perfect code in a bi-Cayley graph of $G$ such as $\Gamma=\BC(G;R,L,S)$. Suppose that $g_i\in G_0\cup G_1\setminus (S^{-1}*K)_0 \cup(S*H)_1$ with $(g^2)_i\in C$.  We continue the proof in the following two cases.
\begin{itemize}
  \item [(i)] We have $(g^2)_i\in H_0$.  From Lemma \ref{LeftTransversal}, $R\cup \{1_G\}$ is a left transversal of $H$ in $G\setminus S^{-1}*K$. So, there exists a unique $r\in R\cup \{1_G\}$ such that $g\in rH$. Therefore,  $gH=rH$ and since $H$ is normal in $G$, we have
       $$g^{-1}H=Hg^{-1}=Hr^{-1}=r^{-1}H.$$
       On the other hand, $g^2\in H$. Hence, we have $gH=g^{-1}H$ and $rH=r^{-1}H$. Since $r, r^{-1}\in R=R^{-1}$ and $R$ is a left transversal of $H$ in $G\setminus S^{-1}*H$, we conclude that $r=r^{-1}$ and $r^2=1_G$. This together with $g\in rH$ imply that there is some $h\in H$ for which we have $(gh)^2=1_G$.
  \item [(ii)] If $(g^2)_i\in K_1$. Then, $g^2\in K$ and $g\in G\setminus S*H$.  From Lemma \ref{LeftTransversal}, $L\cup \{1_G\}$ is a left transversal of $K$ in $G\setminus S*H$. So, there exists a unique $l\in L\cup \{1_G\}$ such that $g\in lK$. Therefore,  $gK=lK$ and since $K$ is normal in $G$, we have
       $$g^{-1}K=Kg^{-1}=Kl^{-1}=l^{-1}K.$$
        As $g^2\in K$, we have $gK=g^{-1}K$ and $lK=l^{-1}K$. Since $l, l^{-1}\in L=L^{-1}$ and $L\cup \{1_G\}$ is a left transversal of $K$ in $G\setminus S*H$, we conclude that $l=l^{-1}$ and $l^2=1_G$. This together with $g\in lK$ imply that there is some $k\in K$ for which we have $(gk)^2=1_G$.
\end{itemize} 
To prove the sufficiency, suppose that $H$ and $K$ are normal subgroups of $G$  and $S\in \mathcal{S}$ and Property \eqref{Property} holds. We now construct inverse-closed subsets $R,L \subseteq G$ such that \eqref{Construction} holds. 
\begin{itemize}
  \item 
  We consider two following sets:
  \begin{align*}
    I_1= & \{x_1H,\dots, x_rH~|~\forall~1\leqslant i\leqslant r:~  x_iH\notin S^{-1}*K, ~ x_iH=(x_iH)^{-1}\}, \\
    I_2= & \{y_1H,\dots, y_sH~|~\forall~1\leqslant i\leqslant s:~y_iH\notin S^{-1}*K, ~ y_iH\neq(y_iH)^{-1}\}.
  \end{align*}
    For each $x_iH\in I_1$, where $1\leq i\leq r$, pick an element $x_ih \in x_iH$. 
     We have $(x_ih)_0\in G_0\setminus (S^{-1}*K)_0$ and $(x_i^2)_0\in H_0$.
     Since $x_i^2\in H$, there exists an element $h^{\prime}\in H$ such that $x_i=x_i^{-1}h^{\prime}$ and so, $(x_ih)^2=x_ihx_i^{-1}h^{\prime}h\in H$ because $H$ is normal in $G$.
      So according to property \ref{Property}, there exists $h_i\in H$ such that $(x_ihh_i)^2=1_G$. Set $z_i:=x_ihh_i$. Note that $z_i H=x_iH\notin S^{-1}*K$. Finally, we construct the inverse-closed set $R$ as follows:
    $$R=\{z_1, \dots, z_r,y_1, y_1^{-1},\dots, y_s, y_s^{-1}\}$$ 
    which is a left transversal of $H$ in $G\setminus S^{-1}*K$.
 \item 
 We define two sets as follows:
  \begin{align*}
    I_1= & \{x_1K,\dots, x_rK~|~\forall~1\leqslant i\leqslant r:~  x_iK\notin  S*H, ~ x_iK=(x_iK)^{-1}\}, \\
    I_2= & \{y_1K,\dots, y_sK~|~\forall~1\leqslant i\leqslant s:~y_iK\notin S*H, ~ y_iK\neq(y_iK)^{-1}\}.
  \end{align*}
    For each $x_iK\in I_1$, pick an element $x_ik \in x_iK$. 
     We have $(x_ik)_0\in G_0\setminus (S*H)_0$ and $(x^2_i)_0\in K_0$.
     Since $x_i^2\in K$, there exists an element $k^{\prime}\in K$ such that $x_i=x_i^{-1}k^{\prime}$ and so, $(x_ik)^2=x_ikx_i^{-1}k^{\prime}k\in K$ because $K$ is normal in $G$.
      So according to property \ref{Property}, there exists $k_i\in K$ such that $(x_ikk_i)^2=1_G$. Set $z_i:=x_ikk_i$. Note that $z_i K=x_iK\notin S*H$.  Finally, we construct the inverse-closed set $L$ as follows:
    $$L=\{z_1, \dots, z_r,y_1, y_1^{-1},\dots, y_s, y_s^{-1}\}.$$ 
 \end{itemize} 
Hence, $H_0\cup K_1$ is a perfect code in $\BC(G, R, L, S)$ and the proof is completed.
\end{proof}
\begin{corollary}\label{OddOrder}
  Let $H$ and $K$ be normal subgroups of $G$. If either $|H|$ or $|G/H|$ and $|K|$ or $|G/K|$ is odd, then $H_0\cup K_1$ is a subgroup perfect code of $G$.
\end{corollary}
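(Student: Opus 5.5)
We deduce the corollary from Theorem~\ref{Perfect-Property}: it suffices to exhibit some $S\in\mathcal{S}$ for which Property~\ref{Property} holds. We take $S=\emptyset\in\mathcal{S}$. Then $(S^{-1}*K)_0\cup(S*H)_1=\emptyset$, so Property~\ref{Property} requires the following. For every $g\in G$ with $g^2\in H$ there is $x\in H$ with $(gx)^2=1_G$. Likewise, for every $g\in G$ with $g^2\in K$ there is $x\in K$ with $(gx)^2=1_G$. The two conditions are symmetric, so we verify the first one under the hypothesis that $|H|$ or $|G/H|$ is odd.

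\emph{Case $|G/H|$ odd.} Suppose $g^2\in H$. Then $gH$ has order dividing $2$ in $G/H$. Since $|G/H|$ is odd, $gH=H$, so $g\in H$. Taking $x=g^{-1}\in H$ gives $(gx)^2=1_G$.

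\emph{Case $|H|$ odd.} Suppose $g^2\in H$ and put $m=o(g^2)$, which divides $|H|$ and hence is odd. Then $o(g)$ is $m$ or $2m$. We look for $x\in H$ with $gx$ of order at most $2$, and write $g^2=h$. The natural candidate is $gx=g^{m}$, which satisfies $(g^m)^2=h^m=1_G$. This requires $x=g^{m-1}$. Because $m$ is odd, $m-1$ is even, so $x=g^{m-1}=h^{(m-1)/2}\in H$. Hence $(gx)^2=g^{2m}=1_G$, as required. Note that this step uses only that $g^2$ has odd order, not that $H$ is normal; normality enters only through Theorem~\ref{Perfect-Property}.

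The same two arguments with $K$ in place of $H$ settle the second condition. Theorem~\ref{Perfect-Property} then yields that $H_0\cup K_1$ is a subgroup perfect code of $G$. The only nontrivial point is the odd-$|H|$ case, where the key choice is $x=g^{m-1}$, an even power of $g$ and hence an element of $\langle g^2\rangle\le H$.
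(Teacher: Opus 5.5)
Your proof is correct and follows the paper's argument. You reduce to Property~\ref{Property} via Theorem~\ref{Perfect-Property}, show $g\in H$ when $|G/H|$ is odd, and take the even power $x=g^{m-1}$ when $|H|$ is odd; this is the paper's $g^{2k}$ with $(g^2)^{2k+1}=1_G$. The only difference is that you fix $S=\emptyset$ while the paper lets $S\in\mathcal{S}$ be arbitrary, and since the corollary only asserts existence, your choice suffices and reduces the problem to the two Cayley graphs $\Cay(G,R)$ and $\Cay(G,L)$.
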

\begin{proof}
We let $S\in \mathcal{S}$ and $C=H_0\cup K_1$. We prove that Property \ref{Property} holds. Suppose that  $g_i\in G_0\cup G_1\setminus (S^{-1}*K)_0 \cup(S*H)_1$ with $(g^2)_i\in C$. Therefore, $g^2\in H$ or $g^2\in K$. Suppose that $g^2\in H$. We continue the proof in the following two cases.
\begin{itemize}
  \item Assume that $|H|$ is odd. Then, $(g^2)^{2k+1}=1_G$ for some integer $k$. Define $h:=g^{2k}$. since  $g^{2k}=(g^2)^k$, it follows that 
      $h\in H$. Moreover,
      $$(gh)^2=g^{4k+2}=(g^2)^{2k+1}=1_G.$$
  \item Let $|G/H|$ be odd. Since $g^2\in H$, it follows that $(gH)^2=H$. Therefore, $gH=H$ because $o(gH)\leqslant 2$ and $o(gH)$ divides $|G/H|$ which is an odd number. Consequently, $g\in H$. Set $h:=g^{-1}\in H$. Evidently, we have $(gh)^2=1_G$.
\end{itemize}
 In the case that $g^2\in K$,  by a similar argument to when $g^2\in H$, we conclude that there exists some $k\in K$ such that $(gk)^2=1_G$.  Therefore, Property \ref{Property} holds and $C$ is a perfect code of $G$.  
 \end{proof}
 Note that Corollary \ref{OddOrder} does not guarantee the connectedness of the bi-Cayley graph over $G$ in which $H_0\cup K_1$  is a perfect code. For instance, in the following  proposition, we prove that $\{e_0\}\cup H_1$, where $H$ is  a nontrivial subgroup of  an abelian group $G$, cannot be perfect code in any conncted bi-cayley graph, even though the conditions in the corollary are satisfied.
 \begin{pro}\label{AbeliSEmpty}
 Let $G$ be an abelian group and $H$ be a non-trivial subgroup of $G$.  If $\{e_0\}\cup H_1$ is a perfect code in a bi-Cayley graph $\BC(G;R, L, S)$, then $S=\emptyset$. 
 \end{pro}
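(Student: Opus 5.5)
The plan is to read off the structure of $S$ from the partition \eqref{Vertex-Perfect}, specialised to the subgroups $\{e\}$ and $H$. The point is that the $G_1$-side will force $S$ to be a union of cosets of $H$, while the $G_0$-side forbids two elements of $S^{-1}$ from lying in the same coset of $H$. Since $H$ is non-trivial, these two requirements are incompatible unless $S=\emptyset$.

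First I will apply \eqref{Vertex-Perfect} to the perfect code $C=\{e_0\}\cup H_1$, that is, with the first subgroup equal to $\{e\}$ and $K=H$. Restricting to the two layers gives
\begin{align*}
G &= R \dotcup (S^{-1}*H) \dotcup \{e\},\\
G &= S \dotcup (L*H) \dotcup H,
\end{align*}
where I use $R*\{e\}=R$ and $S*\{e\}=S$.

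Next I will use the second equality to analyse $S$. The set $(L*H)\cup H$ is a union of left cosets of $H$, so its complement $S$ in $G$ is also a union of left cosets of $H$. Hence if $s\in S$, then $sH\subseteq S$.

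Then I will use the first equality. Writing $S^{-1}H=S^{-1}*H$ means the cosets $tH$ for $t\in S^{-1}$ are pairwise disjoint, so distinct elements of $S^{-1}$ lie in distinct cosets of $H$. Suppose $S\neq\emptyset$, pick $s\in S$, and pick $h\in H$ with $h\neq e$, which exists because $H$ is non-trivial. By the previous step $sh\in S$ with $sh\neq s$, so $s^{-1}$ and $(sh)^{-1}$ are distinct elements of $S^{-1}$. Because $G$ is abelian, $(sh)^{-1}=s^{-1}h^{-1}\in s^{-1}H$. Thus $s^{-1}H=(sh)^{-1}H$, contradicting the disjointness of the union $S^{-1}*H$. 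Therefore $S=\emptyset$.

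The only delicate point is the step where $S$ is shown to be a union of cosets; it relies on the disjointness in \eqref{Vertex-Perfect} together with the fact that $L*H\cup H$ is a union of cosets. Abelianness is used only to identify $(sh)^{-1}H$ with $s^{-1}H$, so normality of $H$ would in fact suffice.
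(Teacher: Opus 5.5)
Your proof is correct, and it reaches the conclusion by a more direct route than the paper. The paper first inverts the layer-$1$ identity $G=L*H\dotcup S\dotcup H$, using commutativity and $L=L^{-1}$, to get $S=S^{-1}$. It then fixes $s\in S$ and $e\neq h\in H$ and argues locally at the vertex $(hs)_1$. If the $C$-neighbour of $(hs)_1$ is some $h'_1$, then $s_1$ acquires a second $C$-neighbour besides $e_0$. If it is $e_0$, then $s_0$ acquires the second $C$-neighbour $(h^{-1})_1$ besides $e_1$.

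These two cases are your two structural facts in vertex form. The first is excluded because $S$ is a union of left cosets of $H$. The second is the failure of disjointness in $S^{-1}*H$; in your version the offending vertex is $(s^{-1})_0$, adjacent to both $e_1$ and $h_1$, so no symmetry of $S$ is needed.

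By reading both facts directly off the two layers of \eqref{Vertex-Perfect}, you avoid the preliminary step $S=S^{-1}$ and the case split. You also isolate the only point where the group structure enters, namely $sh^{-1}s^{-1}\in H$. Hence, as you remark, the conclusion holds for every non-trivial normal subgroup of an arbitrary finite group. This is consistent with Proposition~\ref{Di-Pro1} after interchanging the two layers, since its subgroup $\{1,b\}$ is not normal in $D_{2n}$.

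The paper's argument has the merit of pointing to a concrete vertex whose domination fails. However, it uses commutativity in several rearrangements and, as written, is tied to the abelian hypothesis.
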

 \begin{proof}
   Let $C=\{e_0\}\cup H_1$ be a perfect code in a bi-Cayley graph $\BC(G;R, L, S)$. Suppose, to the contrary, that $S\neq \emptyset$.  We firstly prove that $S=S^{-1}$. By \eqref{Construction},  we obtain
   $$G=L*H\dotcup S\dotcup H.$$
   Since $G$ is abeli and $L$ is inverse-closed, $G^{-1}=G=L*H\dotcup S^{-1}\dotcup H$ and hence, $S=S^{-1}$. Since $S\neq \emptyset$, we let $s, s^{-1}\in S$. We obtain 
   \begin{equation}\label{ss-1}
    s_0\sim e_1~~\text{and}~~s_1\sim e_0.
   \end{equation}
   Take $e\neq h\in H $. Note that $hs\notin H$ because otherwise we would have $S\cap H\neq \emptyset$ which contradicts Claim \ref{SInterKH}. Therefore, $(hs)_1$ is outside $C$ and must have a neighbor in $C$. If $(hs)_1\sim e_0$, then we have $s^{\prime}hs=e$ and $s^{\prime}s=h^{-1}$, for some $s^{\prime}\in S$. Consequently, we obtain $s_0\sim (h^{-1})_1$, a contradiction with \eqref{ss-1}. If $(hs)_1\sim h^{\prime}_1$ for $h^{\prime}\in H$, then $lhs=h^{\prime}$ and $ls=h^{\prime}h^{-1}$. So, $s_1\sim (h^{\prime}h^{-1})_1$, contradicting \eqref{ss-1}. Therefore, $(hs)_1$ has no neighbor in $C$.
   \end{proof}
 \begin{corollary}
 Let $G$ be of odd cardinality. Then, for normal subgroups $H$ and $K$ of $G$, $H_0\cup K_1$ is a subgroup perfect code.
 \end{corollary}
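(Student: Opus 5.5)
This follows directly from Corollary~\ref{OddOrder}, so the plan is simply to check that its hypotheses hold whenever $|G|$ is odd. Let $H$ and $K$ be normal subgroups of $G$. By Lagrange's theorem, $|H|$ and $|K|$ divide $|G|$. Since $|G|$ is odd, both $|H|$ and $|K|$ are odd. The quotients have orders $|G/H|=|G|/|H|$ and $|G/K|=|G|/|K|$, which also divide $|G|$ and are therefore odd too. This is stronger than needed, because the corollary only asks that one of $|H|,|G/H|$ and one of $|K|,|G/K|$ be odd.

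Applying Corollary~\ref{OddOrder} then gives that $H_0\cup K_1$ is a subgroup perfect code of $G$.

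If one prefers a self-contained argument, the route is to verify Property~\ref{Property} directly and then invoke Theorem~\ref{Perfect-Property}, taking for instance $S=\emptyset\in\mathcal{S}$. Take $g\in G$ with $g^2\in H$. Since $o(g)$ is odd, $g$ lies in $\langle g^2\rangle\subseteq H$. Choosing $x=g^{-1}\in H$ gives $(gx)^2=1_G$. The same argument works for $K$.

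There is no real obstacle here. The only point to keep in mind is that the existence statement does not require connectedness, so taking $S=\emptyset$ is legitimate.
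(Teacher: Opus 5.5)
Your proof is correct and matches the paper's: Lagrange's theorem makes $|H|$ and $|K|$ odd, and then Corollary~\ref{OddOrder} applies directly. Your optional self-contained check of Property~\ref{Property} with $S=\emptyset$ is also valid, since $g=(g^2)^{(o(g)+1)/2}\in H$ whenever $g^2\in H$ and $o(g)$ is odd.
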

 \begin{proof}
   Since $|H|$ and $|K|$ are divisors of $|G|$, $|H|$ and $|K|$ must be odd. So, it follows from Corollary \ref{OddOrder} that $H_0 \cup K_1$ is a subgroup perfect code of $G$. 
 \end{proof}
 \subsection{Subgroup Perfect Codes of Abelian Groups}\label{Abeli}
 Here, we investigate subgroup perfect codes in abelian groups. We begin with two claims that will be used to establish the main results of this section. We omit the proof of the following claim, as it is straightforward.
  \begin{claim}\label{P-Sylow-Intersection}
 Let $G$ be an abelian group and $P$ be a Sylow 2-subgroup of $G$. Then, every subgroup $H$ of $G$ can be written as 
$$H=(H\cap P)\times (H\cap Q),$$
where $Q$ is the Hall $2^{\prime}$-subgroup of $G$. 
\end{claim}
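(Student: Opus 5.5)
The plan is to use the primary decomposition of the finite abelian group $G$. Write $G=P\times Q$, where $P$ is the Sylow $2$-subgroup and $Q$ is the Hall $2'$-subgroup, i.e.\ the set of elements of odd order. Here $P=\{g\in G : g^{2^a}=1_G\}$ and $Q=\{g\in G : g^{m}=1_G\}$, where $|G|=2^a m$ with $m$ odd. Every $g\in G$ then has a unique expression $g=g_Pg_Q$ with $g_P\in P$ and $g_Q\in Q$.

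Since $\gcd(2^a,m)=1$, Bezout gives integers $u,v$ with $u2^a+vm=1$. The two components are then powers of $g$ itself:
$$g_P=g^{vm},\qquad g_Q=g^{u2^a}.$$
Indeed $g=g^{vm}g^{u2^a}$, and $(g^{vm})^{2^a}=(g^{2^am})^{v}=1_G$ while $(g^{u2^a})^{m}=1_G$, so these are the $P$- and $Q$-components by uniqueness.

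Now let $H\le G$ and $h\in H$. Since $H$ is a subgroup, $h_P=h^{vm}\in H\cap P$ and $h_Q=h^{u2^a}\in H\cap Q$. Hence $H\subseteq (H\cap P)(H\cap Q)$. The reverse inclusion is immediate from closure of $H$ under multiplication.

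The product is direct. Both factors are subgroups of the abelian group $G$, hence normal. Their intersection lies in $P\cap Q$, which is trivial because an element whose order divides both $2^a$ and $m$ must be $1_G$. This gives $H=(H\cap P)\times(H\cap Q)$.

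There is no real obstacle here; the argument is routine. The one step that needs care is showing that the $P$- and $Q$-components of $h$ lie in $H$ itself, not merely in $G$. Writing them explicitly as powers of $h$ handles this.
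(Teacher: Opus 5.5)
Your argument is correct and complete. Writing the $P$- and $Q$-components of $h$ as the powers $h^{vm}$ and $h^{u2^a}$ is exactly what shows they lie in $H$. The paper gives no proof of this claim, calling it straightforward, and your B\'ezout-based argument is the standard way to supply it.
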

 \begin{claim}
    Suppose that $H$ is a subgroup of $G$ and $S$ is a subset of $G$. If $P$ is a Sylow $2$-subgroup of $G$, then we have 
   $$P \cap (S*H) \subseteq (S\cap P)*(H\cap P).$$
 \end{claim}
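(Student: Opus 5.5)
The plan is to argue elementwise, working (as in this subsection) in an abelian group $G$ with Sylow $2$-subgroup $P$ and Hall $2'$-subgroup $Q$, so that $G=P\times Q$. Every $g\in G$ then has a unique decomposition $g=g_Pg_Q$ with $g_P\in P$ and $g_Q\in Q$. By Claim~\ref{P-Sylow-Intersection}, $H=(H\cap P)\times(H\cap Q)$, so every $h\in H$ splits as $h=h_Ph_Q$ with $h_P\in H\cap P$ and $h_Q\in H\cap Q$.

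First, I take $x\in P\cap(S*H)$. By the definition of $S*H$ there are (unique) $s\in S$ and $h\in H$ with $x=sh$. Writing $s=s_Ps_Q$ and $h=h_Ph_Q$ and using commutativity, $x=(s_Ph_P)(s_Qh_Q)$. Since $x\in P$, uniqueness of the decomposition in $P\times Q$ gives $s_Qh_Q=1_G$, that is, $s_Q=h_Q^{-1}\in H\cap Q$, and $x=s_Ph_P$.

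The second step is to conclude that $s\in S\cap P$. Then $x=s\,h_P$ with $s\in S\cap P$ and $h_P\in H\cap P$. Finally, I check disjointness: the union $(S\cap P)*(H\cap P)$ is a sub-union of the disjoint union $S*H$, because the cosets $u(H\cap P)\subseteq uH$ for distinct $u\in S$ are already pairwise disjoint. Hence $x\in(S\cap P)*(H\cap P)$.

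The main obstacle is exactly the second step, namely showing $s_Q=1_G$. The computation above only yields $s_Q\in H\cap Q$. If $H\cap Q=\{1_G\}$, this is automatic; otherwise one must exclude $S$ meeting $(S\cap\cdots)$ nontrivially in $Q$ via the hypotheses under which the claim will be applied. Those hypotheses are $S\in\mathcal{S}$, so that $S\cap KH=\emptyset$ (Claim~\ref{SInterKH}) and $SH=S*H$. I would first try to use $S\cap KH=\emptyset$ together with $s_Q\in H$ to force $s_Q=1_G$. I flag, however, that without such extra input the statement can fail. For example, take $G=\mathbb{Z}_6$, $H=\mathbb{Z}_3$ and $S=\{1\}$: then $P\cap(S*H)=\{3\}$ while $S\cap P=\emptyset$. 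So this is the step where the argument must draw on the context in which the claim is used.
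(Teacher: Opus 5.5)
Your reduction matches the paper's. The paper also writes $x=sh$ with $s=s_ps_q$ and $h=h_ph_q$ in $G=P\times Q$, obtains $s_qh_q=1_G$ and $x=s_ph_p$, and then simply asserts that $s_p\in S\cap P$. That assertion is exactly the step you flagged, and it is false in general. Your example is a valid counterexample to the claim itself, so no argument can close this gap. In $\mathbb{Z}_6$ with $P=\{0,3\}$, $H=\{0,2,4\}$ and $S=\{1\}$ we have $1=3+4$. Hence $s_P=3\notin S$ even though $s_Q=4\in H$, and $P\cap(S*H)=\{3\}$ while $(S\cap P)*(H\cap P)=\emptyset$.

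Your proposed fallback, using $S\in\mathcal{S}$ to force $s_Q=1_G$, will not work either. Take $K=H$ in the same example. Then $S\cap KH=\emptyset$, and the two $*$-conditions hold trivially because $|S|=1$. In fact $H_0\cup H_1$ is a genuine perfect code of $\BC(\mathbb{Z}_6;\emptyset,\emptyset,\{1\})$, which is the perfect matching $x_0\sim(x+1)_1$. So neither the conditions of Definition~\ref{S-S-Inverse} nor the perfect-code property rescue the step.

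What your computation does establish is the identity
\[
P\cap(S*H)=T_P*(H\cap P),\qquad T=\{s\in S\mid s_Q\in H\},\quad T_P=\{s_P\mid s\in T\}.
\]
For $s\in T$ we have $sH=s_PH$, so $P\cap sH=s_P(H\cap P)$. For $s\notin T$ we have $P\cap sH=\emptyset$. The map $s\mapsto s_P$ is injective on $T$, and the union is disjoint because $s_P(H\cap P)\subseteq sH$. The stated inclusion holds precisely when $T\subseteq P$, for example when $H\cap Q=\{1_G\}$, i.e.\ when $|H|$ is a power of $2$. The argument of Lemma~\ref{SylowLem}, which uses this inclusion with $S_p=S\cap P$, would need to be repaired accordingly.
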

 \begin{proof}
 We know that $G=P\times Q$, where $Q$ is the Hall $2^{\prime}$-subgroup of $G$. Let $x\in P\cap (S*H)$. Then, $x=sh$, for some $s\in S$ and $h\in H$. Moreover, $s= s_ps_q$ and $h=h_ph_q$ with $s_p,h_p\in P$ and $s_q,h_q\in Q$.
 Since $x\in P$, we must have $s_qh_q=1$. On the other hand, $s_p\in S\cap P$ and $h_p\in H\cap P$. Therefore, $x=s_ph_p\in (S\cap P)*(H\cap P)$. 
 \end{proof}
\begin{lemma}\label{SylowLem}
  Let $H$ and $K$ be subgroups of $G$ and  $P$ be a Sylow 2-subgroup of $G$. If $H_0 \cup K_1$  is a subgroup perfect code of $G$, then $(H\cap P)_0\cup (K\cap P)_1 $ is a subgroup perfect code of $P$.
\end{lemma}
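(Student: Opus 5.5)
Throughout I take $G$ abelian, as in this subsection; this is needed because the preceding claim uses $G=P\times Q$. The plan is to restrict the bi-Cayley graph to $P$. Suppose $H_0\cup K_1$ is a perfect code in $\Gamma=\BC(G;R,L,S)$. Write $H_P=H\cap P$, $K_P=K\cap P$, and set
$$R'=R\cap P,\qquad L'=L\cap P,\qquad S'=S\cap P.$$
I will show that $(H_P)_0\cup (K_P)_1$ is a perfect code in $\BC(P;R',L',S')$.

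First, $R'$ and $L'$ are legitimate connection sets. They are inverse-closed, since $P$ is a subgroup and $R,L$ are inverse-closed, and $1_G\notin R'\cup L'$. Also $(S')^{-1}=S^{-1}\cap P$.

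Next I intersect with $P$ the two decompositions of $G$ coming from \eqref{Vertex-Perfect}:
$$G=R*H\dotcup S^{-1}*K\dotcup H, \qquad G=L*K\dotcup S*H\dotcup K.$$
This gives
$$P=(P\cap R*H)\dotcup (P\cap S^{-1}*K)\dotcup H_P,$$
and similarly for the second decomposition. By the preceding claim,
$$P\cap (R*H)\subseteq (R\cap P)*(H\cap P).$$
The reverse inclusion is immediate, because products of elements of $P$ lie in $P$ and $r H_P\subseteq rH$. The union on the right is genuinely disjoint: if $r\neq r'$, then $rH\cap r'H=\emptyset$, so $rH_P\cap r'H_P=\emptyset$. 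Hence $P\cap (R*H)=R'*H_P$. The same argument gives
$$P\cap(S^{-1}*K)=(S')^{-1}*K_P,\quad P\cap(L*K)=L'*K_P,\quad P\cap(S*H)=S'*H_P.$$
Together these yield
$$P=R'*H_P\dotcup (S')^{-1}*K_P\dotcup H_P,\qquad P=L'*K_P\dotcup S'*H_P\dotcup K_P.$$

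Finally I convert these partitions back into a perfect code in $\BC(P;R',L',S')$. One route is Lemma \ref{LeftTrans-Lem} read in reverse: $R'\cup\{1\}$ is a left transversal of $H_P$ in $P\setminus (S')^{-1}*K_P$, and $L'\cup\{1\}$ is a left transversal of $K_P$ in $P\setminus S'*H_P$. Alternatively, I can argue directly from \eqref{Neighbor}. The neighbourhoods of the code vertices are $(R'h)_0\cup(S'h)_1$ for $h\in H_P$ and $(L'k)_1\cup((S')^{-1}k)_0$ for $k\in K_P$. The displayed disjoint unions say that these sets, together with the code, partition $P_0\cup P_1$. This partition forces independence, since no code vertex lies in another code vertex's neighbourhood, and it forces every other vertex to have exactly one neighbour in the code.

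The step needing the most care is the equality $P\cap(R*H)=R'*H_P$, including that the union is disjoint. It rests entirely on the preceding claim, and hence on the abelian decomposition $G=P\times Q$. I expect this to be the main obstacle, and I would state explicitly that $G$ is abelian there.
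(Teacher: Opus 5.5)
\textbf{There is a genuine gap, at exactly the step you flagged.} The inclusion $P\cap(R*H)\subseteq(R\cap P)*(H\cap P)$ is false in general. So is the claim you cite for it: its proof asserts $s_p\in S\cap P$, but $s_p$ is merely the $P$-component of $s\in S$.

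Consequently the restricted graph $\BC(P;R\cap P,L\cap P,S\cap P)$ need not work. Let $G=\langle a\rangle\times\langle c\rangle$ with $o(a)=4$, $o(c)=3$, $H=K=\langle a^2,c\rangle$, $R=L=\emptyset$ and $S=\{ac\}$. Then $\BC(G;R,L,S)$ is the perfect matching $g_0\sim(acg)_1$. Since $H$ has index $2$ and $ac\notin H$, every matched pair contains exactly one vertex of $H_0\cup K_1$, so this set is a perfect code. But $R\cap P=L\cap P=S\cap P=\emptyset$, so your graph on $P=\langle a\rangle$ is edgeless and $a_0$ is undominated. Indeed $P\cap(S^{-1}*K)=\{a,a^3\}$, whereas $(S\cap P)^{-1}*(K\cap P)=\emptyset$.

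The same failure occurs for $R$. With $H=K=\langle c\rangle$, $S=\emptyset$ and $R=L=\{ac,a^3c^2,a^2\}$, one has $P\cap(R*H)=\{a,a^2,a^3\}$ but $(R\cap P)*(H\cap P)=\{a^2\}$.

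The paper's proof takes a different route: it verifies the Property of Theorem~\ref{Perfect-Property} for $S\cap P$. But it uses the same inclusion to conclude $g\notin S^{-1}*K$, and that fails on the first example at $g=a$. So switching to the paper's route would not rescue your argument.

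\textbf{The missing idea is to project rather than intersect.} Write $g=g_pg_q$ with $g_p\in P$ and $g_q\in Q$. If $r\in R$ has $r_q\in H$, then $rH=r_pH$, so $r$ still dominates vertices of $P$ through $r_p$. Set
$R^{\#}=\{r_p : r\in R,\ r_q\in H\}$ and $L^{\#}=\{l_p : l\in L,\ l_q\in K\}$.
These are inverse-closed, avoid $1_G$, and satisfy $P\cap(R*H)=R^{\#}*(H\cap P)$ and $P\cap(L*K)=L^{\#}*(K\cap P)$.

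For $S$ this does not work directly. The two sides would call for projections relative to $H\cap Q$ and to $K\cap Q$, which differ unless $H\cap Q=K\cap Q$. Instead, note three facts:
\begin{itemize}
\item $S^{-1}*K$ and $S*H$ are inverse-closed, being the complements of $R*H\cup H$ and $L*K\cup K$.
\item $S^{-1}*K$ is a union of $H$-cosets, and $S*H$ is a union of $K$-cosets.
\item Hence $S*H=(S*H)^{-1}K=S^{-1}KH=S^{-1}*K$.
\end{itemize}
So $X=P\cap(S*H)=P\cap(S^{-1}*K)$ is inverse-closed and a union of $(H\cap P)(K\cap P)$-cosets.

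If $X=\emptyset$, take $S^*=\emptyset$. If $X\neq\emptyset$, then $S\neq\emptyset$, and $|S||H|=|S||K|$ gives $|H\cap P|=|K\cap P|$. By Hall's theorem you can then choose $S^*\subseteq P$ with $(S^*)^{-1}$ a common transversal of $H\cap P$ and $K\cap P$ in $X$. With these sets, your final paragraph applies verbatim to $\BC(P;R^{\#},L^{\#},S^*)$.
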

\begin{proof}
 We know that $G=P\times Q$, where $Q$ is the Hall $2^{\prime}$-subgroup of $G$. 
  Suppose that $H_0\cup K_1$ is a subgroup perfect code of $G$. Set
   $S\in \mathcal{S}$,
  $S_p=S\cap P$, $H_p=H\cap P$, $K_p=K\cap P$, and $C_p=(H_p)_0 \cup (K_p)_1$. We want to prove that the following statement holds:
  \begin{equation}\label{Sylow}
    \forall g_i\in (P_0\cup P_1)\setminus ((S_p^{-1}*K_p)_0 \cup(S_p*K_p)_1),  ~ (g^2)_i\in C_p~\Rightarrow~\exists x_i\in C_p:~ ~(gx)^2=1_G.
  \end{equation} 
   Assume that $g_i\in P_0\cup P_1\setminus (S_p^{-1}*K_p)_0 \cup(S_p*H_p)_1$ with $(g^2)_i\in C_p$. Note that $g_i\notin (S^{-1}*K)_0 \cup(S*H)_1$ because otherwise $g\in P\cap  S^{-1}*K$ or $g\in P\cap S*H$. This together with Claim \ref{P-Sylow-Intersection} imply that $g\in S_p^{-1}*K_p$ or $g\in S_p*H_p$ which contradicts the assumption in \eqref{Sylow}. Therefore, by Property \ref{Property}, there exists some $x_i\in C$ such that $(gx)^2=1_G$. We note that  $x^{-2}=g^{-2}\in H_p$ when $i=0$, and $x^{-2}\in K_p$ if $i=1$, and so, $x\in H_p$ when $i=0$, and $x\in K_p$ if $i=1$, which implies that $x_i\in C_p$.
   \end{proof}
 Note that the condition in Lemma \ref{SylowLem} is necessary but not sufficient in general, particularly, in the context of   connected bi-Cayley graphs. The following example illustrates this fact.
  \begin{example}
    Let $G=\langle a \rangle \times \langle b \rangle \times  \langle c \rangle$, where $o(a)=4$, $o(b)=3$, and $o(c)=5$. The Sylow 2-subgroup of $G$ is $P=\langle a \rangle$. Let $H=\langle b \rangle$, $K=\langle c \rangle$.  Suppose that 
     $$\Gamma =\BC(P;\{a, a^3\},\{a, a^3\},\{a^2\}),$$
      as shown in Figure \ref{SylowFig}. Then  
      $(H\cap P)_0\cup (K\cap P)_1=\{e_0, e_1\}$
       is a perfect code of $\Gamma$. 
       However, $C=H_0\cup K_1$ cannot be a perfect code in any connected bi-Cayley graph over $G$. Indeed, suppose that there exists a bi-Cayley graph $\BC(G; R, L, S)$ that admits $C$ as a perfect code. By 
     \eqref{Construction}, we must have 
     $$60=3|R|+3+5|S|~~  \text{and}~~ 60=5|L|+5+3|S|.$$ The first equation implies that $|S|\in \{0, 3, 6, 9\}$, whereas the second implies that $|S|\in \{0,5,10,15\}$. Thus, the only possible common value is $|S|=0$. Since $S=\emptyset$ implies that the bi-Cayley graph is disconnected, no connected bi-Cayley graph over $G$ can admit $C$ as a perfect code.
  \end{example}
\begin{figure}[t]
	\centering
	\begin{tikzpicture}[scale=.8]
		\vertex[fill]  at (2,0) [label=above:\scriptsize{$(a)_0$}] {};
		\vertex[fill]  at (4,0) [label=above:\scriptsize{$(a^2)_0$}] {};
		\vertex[fill] at (6,0) [label=above:\scriptsize{$(a^3)_0$}] {};
		\vertex[fill]  at (2,-2) [label=below:\scriptsize{$(a)_1$}] {};
		\vertex[fill]  at (4,-2) [label=below:\scriptsize{$(a^2)_1$}] {};
		\vertex[fill] at (6,-2) [label=below:\scriptsize{$(a^3)_1$}] {};
\tikzstyle{vertex}=[circle, draw, blue, inner sep=0pt, minimum size=4pt]
 \vertex[fill] at (0,0) [label=above:\scriptsize{$(1)_0$}] {};
 \vertex[fill] at (0,-2) [label=below:\scriptsize{$(1)_1$}] {};
\draw[line width=1pt]  (0,0) --  (2,0);
\draw[line width=1pt]  (2,0) --  (4,0);
\draw[line width=1pt]  (4,0) --  (6,0);
\draw[line width=1pt]  (0,-2) --  (2,-2);
\draw[line width=1pt]  (2,-2) --  (4,-2);
\draw[line width=1pt]  (4,-2) --  (6,-2);
\draw[line width=1pt, bend left=35]  (0,0) to   (6,0);
\draw[line width=1pt, bend left=-35]  (0,-2) to   (6,-2);
\draw[line width=1pt]  (0,0) --  (4,-2);
\draw[line width=1pt]  (2,0) --  (6,-2);
\draw[line width=1pt]  (4,0) --  (0,-2);
\draw[line width=1pt]  (6,0) --  (2,-2);
\end{tikzpicture}
\caption{$\BC(\langle a\rangle,\{a,a^3\}, \{a,a^3\}, \{a^2\})$}\label{SylowFig}
\end{figure}
\begin{theorem}\label{EquSyl}
  Let $H$ and $K$ be subgroups of $G$ such that $ H\leq K$ and  $P$ be a Sylow 2-subgroup of $G$. Then,  $H_0 \cup K_1$  is a subgroup perfect code of $G$ if and only if $(H\cap P)_0\cup (K\cap P)_1 $ is a subgroup perfect code of $P$.
\end{theorem}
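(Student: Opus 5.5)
The plan is to take a perfect code in $P$, lift its connection set unchanged to $G$, and check Property~\ref{Property} by splitting each element into its $2$-part and its odd part. Throughout, $G$ is abelian, as in this subsection, so every subgroup is normal and Theorem~\ref{Perfect-Property} applies both to $G$ and to $P$. Write $G=P\times Q$ with $Q$ the Hall $2'$-subgroup. By Claim~\ref{P-Sylow-Intersection}, $H=H_p\times H_q$ and $K=K_p\times K_q$, where $H_p=H\cap P$, $H_q=H\cap Q$, and similarly for $K$.

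\emph{Necessity.} If $H_0\cup K_1$ is a subgroup perfect code of $G$, then Lemma~\ref{SylowLem} gives directly that $(H_p)_0\cup(K_p)_1$ is a subgroup perfect code of $P$. Nothing more is needed here.

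\emph{Sufficiency.} Suppose $(H_p)_0\cup(K_p)_1$ is a perfect code in some $\BC(P;R_p,L_p,S_p)$, and put $S:=S_p\subseteq P\subseteq G$.

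\emph{Step 1: $S\in\mathcal S$ for the pair $H,K$ in $G$.} Since $H\leq K$ we have $KH=K$. Hence $S\cap KH=S_p\cap K=S_p\cap K_p$, which is empty by Claim~\ref{SInterKH} applied in $P$. For disjointness, $sh=s'h'$ with $s,s'\in S_p$ forces $h'h^{-1}=s^{-1}s'\in P\cap H=H_p$. So the unions $S*H$ and $S^{-1}*K$ are disjoint exactly because the corresponding unions over $H_p$, $K_p$ are disjoint in $P$, which holds since $S_p\in\mathcal S$ relative to $P$.

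\emph{Step 2: translate the excluded sets to $P$.} Because $K=K_p\times K_q$ and $S\subseteq P$, an element $g=g_pg_q$ lies in $S^{-1}K$ if and only if $g_p\in S_p^{-1}K_p$. Likewise $g\in SH$ if and only if $g_p\in S_pH_p$.

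\emph{Step 3: verify Property~\ref{Property} in $G$.} By the necessity direction of Theorem~\ref{Perfect-Property} applied to the group $P$, the Property holds for $(H_p,K_p,S_p)$ in $P$. Take $g_0$ with $g\notin S^{-1}*K$ and $g^2\in H$; the case $i=1$ with $K$ is symmetric.
\begin{itemize}
\item[(a)] Comparing components, $g_p^2\in H_p$ and $g_q^2\in H_q$, and by Step~2 we have $g_p\notin S_p^{-1}*K_p$.
\item[(b)] The Property in $P$ then supplies $h_p\in H_p$ with $(g_ph_p)^2=1$.
\item[(c)] For the odd part, $g_q^2\in H_q$ and $|Q|$ is odd, so $g_q\in\langle g_q^2\rangle\leq H_q$. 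This is the same argument as in Corollary~\ref{OddOrder}. Set $h_q:=g_q^{-1}$.
\item[(d)] With $h=h_ph_q\in H$, commutativity gives $(gh)^2=(g_ph_p)^2(g_qh_q)^2=1$.
\end{itemize}
Theorem~\ref{Perfect-Property} then yields that $H_0\cup K_1$ is a subgroup perfect code of $G$.

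The main obstacle I expect is Step~1 together with Step~2. These are the places where the hypothesis $H\leq K$ is needed: it turns the independence condition $S\cap KH=\emptyset$ into a condition involving only $K$, which factors cleanly along $P\times Q$. It also makes membership in the excluded sets depend only on the $2$-part of $g$. The remaining verifications are routine componentwise computations.
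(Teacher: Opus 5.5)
There is a genuine gap, and it sits in the case you call symmetric. Your Step~2 is false as stated. Since $S=S_p\subseteq P$, one has $g\in S^{-1}K$ if and only if $g_p\in S_p^{-1}K_p$ \emph{and} $g_q\in K_q$. Likewise $g\in SH$ if and only if $g_p\in S_pH_p$ \emph{and} $g_q\in H_q$.

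For $i=0$ this is harmless. Step (c) gives $g_q\in H_q\subseteq K_q$, so $g\notin S^{-1}K$ really does force $g_p\notin S_p^{-1}K_p$; this is where $H\le K$ genuinely enters. For $i=1$, however, $g^2\in K$ only yields $g_q\in K_q$. If $g_q\notin H_q$, then $g\notin SH$ holds for \emph{every} $g_p$, including $g_p\in S_pH_p$, and there the Property in $P$ says nothing.

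A second obstruction has the same source. By \eqref{Vertex-Perfect}, every such perfect code has $G\setminus S*H=(L\cup\{1_G\})K$. So $SH$ must be a union of cosets of $K$, i.e.\ $SK=SH$. This condition is tacitly needed in the sufficiency half of Theorem~\ref{Perfect-Property}, but it is not encoded in $\mathcal S$, so your Step~1 does not supply it. For $S=S_p\neq\emptyset$ it fails whenever $H\cap Q\neq K\cap Q$, since $SK=S_pK_p(K\cap Q)$ while $SH\subseteq P(H\cap Q)$.

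This cannot be patched, because the sufficiency direction is false. The paper's proof has the same hole: in the case $g_p\in S_p*H_p$ it only reaches $g\in S*K$, and this does not contradict the hypothesis $g\notin S*H$ available when $i=1$.

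Here is a counterexample. Take $G=\langle a\rangle\times\langle b\rangle\times\langle c\rangle$ with $o(a)=o(b)=4$ and $o(c)=3$, and let $H=\langle a^2\rangle\le K=\langle a^2,c\rangle$. Then $H\cap P=K\cap P=\langle a^2\rangle$, which is a subgroup perfect code of $P$ by Example~\ref{ProjectEx}.

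Your lift $S=\{a,ab,ab^2,ab^3,b,b^3\}$ violates the Property at $g=ac$, $i=1$. Indeed $g\notin SH\subseteq P$ and $g^2=a^2c^2\in K$, but $(gk)^2$ has $P$-component $a^2$ for every $k\in K$.

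Worse, no bi-Cayley graph over $G$ works. Distinct elements of $S$ must lie in distinct $K$-cosets; otherwise some vertex of $(S^{-1}K)_0$ has two neighbours in $K_1$. Since $SH$ is also a union of $K$-cosets, we get $SK\subseteq SH$, hence $6|S|\le 2|S|$, forcing $S=\emptyset$. Then $R\cup\{1_G\}$ is an inverse-closed left transversal of $H$ in $G$. So the representative $r$ of the self-inverse coset $aH=\{a,a^3\}$ satisfies $r=r^{-1}$, which is impossible since $a$ and $a^3$ have order $4$.

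What your lifting argument does prove is sufficiency when $S_p=\emptyset$ or $H\cap Q=K\cap Q$. The same counting in $P$ shows that $S_p\neq\emptyset$ forces $H\cap P=K\cap P$. Hence your argument covers the cases $H=K$ and $H\cap P\neq K\cap P$.
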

\begin{proof}  
The necessity follows from Lemma \ref{SylowLem}.

    For the sufficiency, suppose that $G=P\times Q$, where $Q$ is the $2^{\prime}$-Hall subgroup of $G$. Set $H_p:=H\cap P$, $H_q:=H\cap Q$, $K_p:=K\cap P$, and $K_q:=K\cap Q$. Let \eqref{Sylow} holds. We find a bi-Cayley graph in which $H_0 \cup K_1$ is a perfect code.
    Let $S\in \mathcal{S}$ and $S$ be inverse-closed.
      Let $g_i\in G_0\cup G_1\setminus (S^{-1}*K)_0 \cup(S*H)_1$ with $(g^2)_i\in H_0\cup K_1$.
  We write $g=g_pg_q$, where $g_p\in P$ and $g_q\in Q$.
  With Claim \ref{P-Sylow-Intersection}, since  $g^2=g_p^2g_q^2\in H\cup K$, it follows that $g_p^2\in H_p\cup K_p$ and $g_q^2\in H_q\cup K_q$. On the other hand, $g_q$ has odd order because $o(g_q)$ divides $|H_q\cup K_q|$, which is odd.  This implies that
  \begin{equation}\label{gq}
    g_q=g_q^2 g_q^{2k}\in H_q\cup K_q.
  \end{equation} 
  We now note that $(g_p)_i\notin(S_p^{-1}*K_p)_0 \cup (S_p*H_p)_1$ because otherwise if  $g_p\in  S_p^{-1}*K_p$, then $g_p=s^{-1}_pk_p$, where $s_p\in S_p$ and $k_p\in K_p$. Using \eqref{gq}, $g=g_pg_q= s^{-1}_pk_p g_q\in S^{-1}*K$ which is a contradiction. Similarly, if we consider $g_p\in  S_p*H_p$, then $g_p=s_ph_p$, where $s_p\in S_p$ and $h_p\in h_p$. This leads to $g=g_pg_q= s_ph_p g_q\in S*K=S^{-1}*K$ by \eqref{gq}, a contradiction.
          We now conclude from \eqref{Sylow} that there exists $(x_p)_i\in (H_p)_0\cup (K_p)_1$  satisfying $(g_px_p)^2=1_G$.
  We now let $x:= x_pg_q^{-1}$ which is an element of $H\cup K$. We have $(gx)^2=(g_pg_qx_pg_q^{-1})^2=1_G$ and so, $H_0\cup K_1$ is a subgroup perfect code of $G$.
\end{proof}
It should be noted that, under the condition in Theorem \ref{EquSyl}, admitting a perfect code in a bi-Cayley graph over a group is equivalent to admitting a perfect code in a bi-Cayley graph over its Sylow \(2\)-subgroup. However, this does not imply that a perfect code necessarily exists under this condition in general; rather, the existence of such a perfect code may require additional conditions, as in the special case described below.
\begin{pro}
  Let $G$ be an abelian group and $H$ and $K$ be subgroups of $G$ such that $H\leq K$. If $H_0 \cup K_1$ is a perfect code of a bi-Cayley graph $\BC(G;R, L, S)$, where $S$ is nontrivial and $S=S^{-1}$, then $H=K$. 
\end{pro}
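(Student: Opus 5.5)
The plan is to argue by contradiction using the two partitions of $G$ coming from \eqref{Vertex-Perfect}, together with the hypothesis $S=S^{-1}$. Suppose $H\neq K$ and fix some $k\in K\setminus H$, which exists because $H\leq K$. Since $S$ is nonempty, also fix some $s\in S$. Reading \eqref{Vertex-Perfect} on the two layers $G_0$ and $G_1$ separately gives
$$G=R*H\dotcup S^{-1}*K\dotcup H \qquad\text{and}\qquad G=L*K\dotcup S*H\dotcup K .$$
Because $S=S^{-1}$, the first partition contains $S*K$. In particular the cosets $s'K$, for $s'\in S$, are pairwise distinct: if $s'K=s''K$ with $s'\neq s''$, then the union $S^{-1}*K$ would not be disjoint.

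The main step is to locate the element $sk$ in the second partition and rule out each of the three blocks.

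\emph{Block $K$.} If $sk\in K$, then $s\in K=KH$, which contradicts Claim \ref{SInterKH}.

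\emph{Block $S*H$.} Suppose $sk=s'h$ with $s'\in S$ and $h\in H$. Using commutativity, $s'^{-1}s=hk^{-1}\in K$, so $sK=s'K$. By the distinctness of these cosets noted above, $s=s'$. Then $k=h\in H$, contradicting the choice of $k$.

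\emph{Block $L*K$.} Suppose $sk=lk'$ with $l\in L$ and $k'\in K$. Then $s=l\,k'k^{-1}\in lK\subseteq L*K$. But also $s=s\cdot 1_G\in S*H$, and $L*K$ and $S*H$ are disjoint blocks of the same partition, so this is a contradiction.

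Since $sk$ lies in none of the three blocks, the second partition fails, which is impossible. Hence $K\setminus H=\emptyset$, and therefore $H=K$.

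I expect the only delicate point to be the middle block. The argument there uses exactly two ingredients: $S^{-1}=S$, which turns the disjointness of $S^{-1}*K$ into distinctness of the cosets $sK$ for $s\in S$; and commutativity, which lets us rewrite $sk=s'h$ as $s'^{-1}s\in K$. Everything else is a direct reading of the partitions in \eqref{Vertex-Perfect}.
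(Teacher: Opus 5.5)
Your proof is correct and is essentially the paper's argument. The paper examines the vertex $(ks)_1$ (your $(sk)_1$), notes it is not in $C$, and rules out a neighbour in $K_1$ and forces $k=h$ for a neighbour in $H_0$, using that $s_1$ and $s_0$ already have the $C$-neighbours $e_0$ and $e_1$ (by $S=S^{-1}$); this is exactly what your disjointness of $S*H$ and $L*K$, and of the cosets $s'K$ for $s'\in S$, encodes. One remark: your middle block does not need commutativity, since $sk=s'h$ gives $(s')^{-1}s=hk^{-1}$ directly, so your version proves the statement for arbitrary finite $G$, whereas the paper's computations with $(ks)_1$ do use that $G$ is abelian.
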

\begin{proof}
  Let $C=H_0 \cup K_1$ is a perfect code of a bi-Cayley graph $\BC(G;R, L, S)$ with  $S\neq \emptyset$ and $S=S^{-1}$. We let $s, s^{-1}\in S$. We obtain 
   \begin{equation}\label{s0s1}
    s_0\sim e_1\in K_1~~\text{and}~~s_1\sim e_0\in H_0.
   \end{equation}
   Take $e\neq k\in K$. Note that $(ks)_1\notin C$ because otherwise we would have $S\cap K\neq \emptyset$ which contradicts Claim \ref{SInterKH}. Therefore, $(ks)_1$ must have a neighbor in $C$. If $(ks)_1\sim k^{\prime}_1$ where $k^{\prime}\in K$, then $lks=k^{\prime}$. This leads to $s_1\sim  (k^{\prime}k^{-1})_1\in C$, contradicting \eqref{s0s1}. We now suppose that $(ks)_1\sim h_0$ where $h\in H$, then $s^{\prime}ks=h$ for some $s^{\prime}\in S$. So, we obtain $s_0\sim (hk^{-1})_1$ and because of \eqref{s0s1}, we must have $k=h\in H$. Therefore, $K\leq H$.
\end{proof}

In \cite[Lemma 2.6]{Huang-2018}, it is proved that for a Cayley graph over an abelian group $G$ with Sylow 2-subgroup $P=\langle a_1 \rangle \times \dots \times \langle a_n \rangle$, where $o(a_i)=2^{m_i}>1$ for $1\le i\le n$, if $H$ is a subgroup perfect code, then either $H\cap P$ is trivial or $H\cap P$ projects onto at least one of $\langle a_1\rangle,\ldots,\langle a_n\rangle$. The following example shows that this result does not hold for bi-Cayley graphs in general.
\begin{example}\label{ProjectEx}
  Let $G=\langle a\rangle \times \langle b\rangle$, where both $a$ and $b$ have order 4. Then the Sylow 2-subgroup of $G$ is $P=G$. Let $H=\langle a^{2}\rangle$, and define
$$R=L=\{b^{2}\}, \qquad
S=\{a,ab,ab^{2},ab^{3},b,b^{3}\}.$$
It is straightforward to verify that both $R\cup S^{-1}\cup{1_G}$ and $L\cup S\cup{1_G}$ are left transversals of $H$ in $G$. Hence, by Corollary~\ref{LeftTransversal}, $H$ is a subgroup perfect code of $\BC(G;R,L,S)$. On the other hand,
$
H\cap P=\langle a^{2}\rangle
$
which projects onto neither $\langle a\rangle$ nor $\langle b\rangle$. 
\end{example}
In contrast Example \ref{ProjectEx}, if the set $S$ in a bi-Cayley graph $\BC(G;R,L,S)$ is empty, then that graph would be the disjoint union of two Cayley graphs. This leads to the following lemma. 
\begin{lemma}\label{OneCyclic}
  Let $G$ be an abelian group, and let $H,K\leq G$. Suppose that
$P=\langle a_1\rangle\times\cdots\times\langle a_n\rangle$
is a Sylow $2$-subgroup of $G$, where $o(a_i)=2^{m_i}$ for $1\leq i\leq n$. If $H_0\cup K_1$ is a subgroup perfect code of $\BC(G;R,L,\emptyset)$, then each of $H\cap P$ and $K\cap P$ is either trivial or projects onto at least one of the $\langle a_i\rangle$.
\end{lemma}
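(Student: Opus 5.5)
When $S=\emptyset$, the graph $\BC(G;R,L,\emptyset)$ is the disjoint union of the Cayley graphs $\Cay(G,R)$ on $G_0$ and $\Cay(G,L)$ on $G_1$. So the plan is to reduce the lemma directly to \cite[Lemma 2.6]{Huang-2018}, applied once to $H$ and once to $K$.

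\emph{Step 1: split the code into two Cayley perfect codes.} With $S=\emptyset$ there are no edges between $G_0$ and $G_1$. Hence $H_0\cup K_1$ is a perfect code of $\BC(G;R,L,\emptyset)$ if and only if $H_0$ is a perfect code of the induced subgraph on $G_0$ and $K_1$ is a perfect code of the induced subgraph on $G_1$. By condition (1) in the definition, $g_0\sim h_0$ if and only if $hg^{-1}\in R$, so the subgraph on $G_0$ is isomorphic to $\Cay(G,R)$ via $g_0\mapsto g$. Likewise the subgraph on $G_1$ is isomorphic to $\Cay(G,L)$. Here $R=R^{-1}$, $L=L^{-1}$ and $1_G\notin R\cup L$, so these are genuine undirected Cayley graphs.

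Alternatively, Step 1 can be read off Lemma \ref{LeftTrans-Lem} with $S=\emptyset$. Since $S^{-1}*K=S*H=\emptyset$, the lemma says that $R\cup\{1_G\}$ is a left transversal of $H$ in $G$ and $L\cup\{1_G\}$ is a left transversal of $K$ in $G$. This is exactly the Huang et al.\ criterion for $H$ to be a subgroup perfect code of $\Cay(G,R)$ and $K$ to be one of $\Cay(G,L)$.

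\emph{Step 2: apply the Cayley result.} Lemma 2.6 of \cite{Huang-2018} states that if a subgroup is a perfect code of a Cayley graph over the abelian group $G$, then its intersection with $P=\langle a_1\rangle\times\cdots\times\langle a_n\rangle$ is trivial or projects onto some $\langle a_i\rangle$. Applying it to $H$ in $\Cay(G,R)$ and to $K$ in $\Cay(G,L)$ gives the statement.

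\emph{Main obstacle.} The only delicate point is matching conventions, namely the direction of adjacency ($hg^{-1}\in R$) and left versus right transversals. Since $G$ is abelian, these distinctions disappear, so no real difficulty should arise.

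If a self-contained argument is wanted instead of citing the Cayley result, I would reprove it as follows.

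1. By Theorem \ref{EquSyl} in the case $H=K$ (or by Lemma \ref{SylowLem}), reduce to $P$.
2. Suppose $H_p=H\cap P$ is nontrivial and projects onto no $\langle a_i\rangle$.
3. Choose $g\in P$ with $g^2\in H_p$ but $g\notin H_p$, chosen so that $gx$ is never an involution for $x\in H_p$. This is possible because $H_p$ lies in $\langle a_1^2\rangle\times\cdots\times\langle a_n^2\rangle$ after projection considerations.
4. Then the coset $gH$ is self-inverse, so the transversal $R\cup\{1_G\}$ must contain an involution in it, and such an involution does not exist. This contradicts Property \ref{Property}.

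Constructing that element $g$ would be the hard step in the self-contained version.
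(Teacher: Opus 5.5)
Your argument is the same as the paper's: with $S=\emptyset$ the bi-Cayley graph is the disjoint union of $\Cay(G,R)$ on $G_0$ and $\Cay(G,L)$ on $G_1$, and \cite[Lemma 2.6]{Huang-2018} is applied to $H_0$ and $K_1$ separately. In your optional self-contained version, the element $g$ you flag as the hard step is easy to find: if $1\neq H\cap P\le\langle a_1^2\rangle\times\cdots\times\langle a_n^2\rangle$, pick $h\in (H\cap P)\setminus\{x^2 : x\in H\cap P\}$ (nonempty because squaring on a nontrivial $2$-group is not surjective), write $h=g^2$ with $g\in P$, and then $(gx)^2=hx^2\neq 1$ for every $x\in H\cap P$.
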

\begin{proof}
Let $\Gamma=\BC(G;R,L,\emptyset)$, and suppose that $H_0\cup K_1$ is a subgroup perfect code in $\Gamma$. Since $S=\emptyset$, we can view $\Gamma$ as the disjoint union of two Cayley graphs,
$\Gamma_0=\Cay(G_0,R)$ and $\Gamma_1=\Cay(G_1,L)$.
Since $H_0$ is a subgroup perfect code in $\Gamma_0$, \cite[Lemma 2.6]{Huang-2018} implies that $H\cap P$ is either trivial or projects onto at least one direct factor $\langle a_i\rangle$. Similarly, since $K_1$ is a subgroup perfect code in $\Gamma_1$, \cite[Lemma 2.6]{Huang-2018} implies that $K\cap P$ is either trivial or projects onto at least one direct factor $\langle a_i\rangle$.
\end{proof}
We note that the condition in Lemma~\ref{OneCyclic} is not sufficient, as illustrated by the following example.
\begin{example}\label{ExOneCyclic}
 Let $G=\langle a \rangle \times \langle b \rangle$, where both $a$ and $b$ have order 4. Set 
 $H=\langle a, b^2 \rangle$.
Then, $\pi _1(H)=\langle a \rangle$. Take $g_0=b_0\in G_0$. Then, $(g^2)_0=(b^2)_0\in H_0$. On the other hand, every element $h\in H$ has the form $h=a^ib^{2j}$, where $i\in \{0, 1,2,3\}$ and $j\in \{0,1\}$. Hence,  
$gh=a^ib^{2j+1}$,
 and therefore 
 $(gh)^2=a^{2i}b^2\neq 1_G$. Therefore, by Theorem~\ref{Perfect-Property}, $H$ cannot be a perfect code in any bi-Cayley graph.
\end{example}
We show that the condition in Lemma \ref{OneCyclic} is sufficient whenever $H\cap P$ is cyclic.
\begin{theorem}\label{ThCyclic}
  Let $G$ be an abelian group and $H, K\leq G$. Suppose that $P=\langle a_1 \rangle \times \dots \times \langle a_n \rangle $ is a Sylow 2-subgroup of $G$ with $o (a_i)=2^{m_i}$ for $1\leq i\leq n$.
   Let $H\cap P$ and $K\cap P$ be cyclic. Then, $H_0\cup K_1$ is a subgroup perfect code of a bi-Cayley graph $\BC(G; R,L, \emptyset)$ if and only if each $H\cap P$ and $K\cap P$ is either trivial or  projects onto at least one of the $\langle a_i \rangle$. 
\end{theorem}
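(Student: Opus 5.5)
The plan is to reduce both directions to the Cayley graph case. With $S=\emptyset$, the graph $\BC(G;R,L,\emptyset)$ is the disjoint union of $\Cay(G_0,R)$ and $\Cay(G_1,L)$. Hence $H_0\cup K_1$ is a perfect code of it exactly when $H$ is a perfect code of $\Cay(G,R)$ and $K$ is a perfect code of $\Cay(G,L)$. One could also read this off Lemma \ref{LeftTrans-Lem} with $S=\emptyset$: $R\cup\{1_G\}$ must be a left transversal of $H$ in $G$, and $L\cup\{1_G\}$ one of $K$. So the statement splits into two identical independent statements, one for $H$ and one for $K$. It therefore suffices to show: for an abelian $G$ and $H\le G$ with $H\cap P$ cyclic, $H$ is a subgroup perfect code of some Cayley graph over $G$ if and only if $H\cap P$ is trivial or projects onto some $\langle a_i\rangle$.

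Necessity is exactly Lemma \ref{OneCyclic}, so nothing new is needed there.

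For sufficiency, I would use the criterion behind Theorem \ref{Perfect-Property} specialised to $S=\emptyset$; $H$ is normal since $G$ is abelian. An inverse-closed transversal $R\cup\{1_G\}$ of $H$ in $G$ exists if and only if every $g\in G$ with $g^2\in H$ admits some $h\in H$ with $(gh)^2=1_G$. The construction of $R$ in the sufficiency part of Theorem \ref{Perfect-Property} with $S=\emptyset$ gives exactly this. By the Sylow reduction in Theorem \ref{EquSyl} and Lemma \ref{SylowLem} (take $K=H$ there), it is enough to verify the condition inside $P$ for $H_p=H\cap P$. Odd parts are automatically fine, since $g_q^2\in H_q$ forces $g_q\in H_q$.

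If $H_p$ is trivial, then $g^2=1$ and we take $h=1$. Otherwise $H_p=\langle x\rangle$ with $x=a_1^{e_1}\cdots a_n^{e_n}$, and the projection to some $\langle a_j\rangle$ is onto, i.e. $e_j$ is odd. Suppose $g\in P$ satisfies $g^2=x^t\in H_p$. Comparing the $a_j$-coordinates gives $2c_j\equiv te_j \pmod{2^{m_j}}$, where $g=\prod a_i^{c_i}$. Since $e_j$ is odd, this forces $t$ to be even, say $t=2u$. Then $h=x^{-u}$ gives $(gh)^2=g^2x^{-2u}=1$. Hence the condition holds for $H_p$, hence for $H$, and likewise for $K$. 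We then assemble $R$ and $L$ via the construction in Theorem \ref{Perfect-Property} with $S=\emptyset$, so that \eqref{Construction} holds with $S=\emptyset$.

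The only delicate point I expect is the parity argument. If $m_j=1$, the congruence $2c_j\equiv te_j \pmod 2$ gives $t$ even directly. For $m_j\ge1$ in general, reducing modulo $2$ works equally well, since $2\mid 2^{m_j}$. I would double-check that the transfer from $P$ back to $G$ in Theorem \ref{EquSyl} does not need $H\le K$ when $H=K$ is used separately for each subgroup; if it does, I would instead argue directly with $g=g_pg_q$ as in that proof.
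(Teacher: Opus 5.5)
Your proof is correct and has the same skeleton as the paper. With $S=\emptyset$ the graph $\BC(G;R,L,\emptyset)$ is $\Cay(G,R)\dotcup\Cay(G,L)$, so the statement splits into one Cayley-graph statement for $H$ and one for $K$, and necessity is Lemma~\ref{OneCyclic}.

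The difference is in sufficiency. The paper cites \cite[Theorem 2.7]{Huang-2018} for each of $H$ and $K$, whereas you re-prove that result inside the paper's own framework, in three steps:
\begin{itemize}
\item Property~\ref{Property} with $S=\emptyset$, i.e.\ $g^2\in H\Rightarrow (gh)^2=1_G$ for some $h\in H$.
\item Absorbing the odd part: $g_q\in\langle g_q^2\rangle\le H\cap Q$, so $h=h_pg_q^{-1}$ works.
\item A parity computation: if $H\cap P=\langle x\rangle$ with $x=\prod_i a_i^{e_i}$ and $e_j$ odd, then $g^2=x^t$ forces $t$ even, and $h=x^{-t/2}$ works.
\end{itemize}
The citation is shorter. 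Your version is self-contained, produces $R$ and $L$ explicitly, and shows exactly where cyclicity enters (writing $g^2$ as a power of a single generator), which is precisely what fails in Example~\ref{ExOneCyclic}.

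Two small points:
\begin{itemize}
\item Your worry about the hypothesis $H\le K$ in Theorem~\ref{EquSyl} is moot: take $K=H$. The real issue is that the theorem's \emph{statement} only yields some bi-Cayley graph, possibly with $S\neq\emptyset$. So transfer the condition from $P$ to $G$ with the direct $g=g_pg_q$ argument, or run that proof with $S=\emptyset$, rather than quoting the statement.
\item When you specialise the construction in Theorem~\ref{Perfect-Property}, the trivial coset $H$ must be represented by $1_G$ and kept out of $R$. Read literally, the set $I_1$ there contains it.
\end{itemize}
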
 
\begin{proof}
The necessity holds by Lemma \ref{OneCyclic}.

To prove the sufficiency, let $\Gamma=\BC(G;R,L,\emptyset)$. Since $S=\emptyset$, $\Gamma$ is a disjoint union of two Cayley graphs
$\Gamma_0=\Cay(G_0,R)$ and $\Gamma_1=\Cay(G_1,L)$.
By \cite[Theorem 2.7]{Huang-2018}, $H_0$ is a  perfect code of $\Gamma_0$ and $K_1$ is a  perfect code of $\Gamma_1$. Therefore, $H_0\cup K_1$ is a  perfect code of $\Gamma$.
\end{proof}
\begin{corollary}
  Suppose that $G$ is a cyclic group and $H, K\leq G$. Then, $H_0 \cup K_1$ is a subgroup perfect code of some bi-Cayley graph $\BC(G; R,L, \emptyset)$ if and only if either $|H|$ or $|G/H|$ is odd, and either $|K|$ or $|G/K|$ is odd.
\end{corollary}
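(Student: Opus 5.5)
The plan is to reduce the corollary to Theorem~\ref{ThCyclic}, or more directly to the Cayley-graph case, since with $S=\emptyset$ the graph $\BC(G;R,L,\emptyset)$ is the disjoint union of $\Cay(G_0,R)$ and $\Cay(G_1,L)$. Thus $H_0\cup K_1$ is a perfect code of such a graph if and only if $H$ is a perfect code of some Cayley graph $\Cay(G,R)$ and $K$ is a perfect code of some $\Cay(G,L)$. So it suffices to prove the following for a single subgroup $H$ of a cyclic group $G$: $H$ is a subgroup perfect code of a Cayley graph on $G$ if and only if $|H|$ or $|G/H|$ is odd.

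Since $G$ is cyclic, its Sylow $2$-subgroup is cyclic, say $P=\langle a\rangle$ with $o(a)=2^m$. This is the case $n=1$ of Theorem~\ref{ThCyclic}, and $H\cap P$ and $K\cap P$ are automatically cyclic. Theorem~\ref{ThCyclic} (equivalently Lemma~\ref{OneCyclic} together with \cite[Theorem 2.7]{Huang-2018}) then says that $H_0\cup K_1$ is a perfect code of some $\BC(G;R,L,\emptyset)$ if and only if each of $H\cap P$ and $K\cap P$ is either trivial or projects onto $\langle a\rangle$.

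It remains to translate this condition, which I do via Claim~\ref{P-Sylow-Intersection}: $H=(H\cap P)\times(H\cap Q)$ with $Q$ the odd-order Hall subgroup. If $H\cap P=1$, then $|H|=|H\cap Q|$ is odd. Since the only factor is $\langle a\rangle$ itself, "projects onto $\langle a\rangle$" means $H\cap P=P$; then $|G/H|=|Q/(H\cap Q)|$ is odd. Conversely, if $|H|$ is odd then $H\cap P$ is a $2$-group of odd order, hence trivial. If $|G/H|$ is odd, then $|P:H\cap P|$ divides $|G:H|$ (because $|G:H|=|P:H\cap P|\,|Q:H\cap Q|$) and is a power of $2$, hence equals $1$, so $H\cap P=P$. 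The same argument applies to $K$.

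The only delicate point is making sure that "projects onto $\langle a_1\rangle$" for $n=1$ really means equality with $P$; since the projection onto the sole factor is the identity, this is immediate. Everything else is bookkeeping with the decomposition $G=P\times Q$.
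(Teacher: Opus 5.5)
Your proposal is correct and follows the same route as the paper: apply Theorem~\ref{ThCyclic} with the single cyclic factor $P=\langle a\rangle$, so that ``projects onto $\langle a\rangle$'' means $H\cap P=P$, and then translate $H\cap P=1$ and $H\cap P=P$ into $|H|$ odd and $|G/H|$ odd, respectively. You also supply details the paper leaves implicit: that $H\cap P$ and $K\cap P$ are automatically cyclic, and the index identity $|G:H|=|P:H\cap P|\,|Q:H\cap Q|$ coming from $G=P\times Q$.
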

\begin{proof}
  By Theorem \ref{ThCyclic}, $H$ is a subgroup perfect code of $G$ if and only if $H\cap P$ is either trivial or equal to $P$, where $P$ is the Sylow 2-subgroup of $G$.  Moreover, $H\cap P$ is trivial if and only if $|H|$ is odd, while $H\cap P=P$ if and only if $|G/H|$ is odd. The same statements hold for $K$.
\end{proof}
\subsection{Dihedral Groups}\label{Dihedral}
 Let
$D_{2n}=\langle a,b\mid a^n=b^2=(ab)^2=e\rangle$
be the dihedral group of order $2n$, where $n\geq 3$.
The subgroups of $D_{2n}$ are of the form $\langle a^t\rangle$, where $t$ divides $n$, or
$\langle a^t,a^s b\rangle$,
where $t$ is a divisor of $n$ and $0\leq s\leq t-1$. In contrast to the case of Cayley graphs, where not every subgroup is a perfect code \cite[Theorem 2.11]{Huang-2018}, every subgroup of $D_{2n}$ can occur as a subgroup perfect code in a bi-Cayley graph. In particular, if $H\leq D_{2n}$, then $H_0\cup H_1$ is a perfect code in some regular graph; see \cite[Corollary 5.3]{Wang-2026}. Here, we are interested in the more general case of subgroup perfect codes of the form $H_0\cup K_1$, where $H,K\leq D_{2n}$ and $H\neq K$. 
\begin{pro}\label{Di-Pro1}
Let $H=\{1,b\}$ and $K=\{e\}$ be two subgroups of $D_{2n}$. Then, $H_0\cup K_1$ is a subgroup perfect code of $D_{2n}$.
\end{pro}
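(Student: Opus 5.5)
The plan is to exhibit an explicit bi-Cayley graph $\BC(D_{2n};R,L,S)$ and to verify the two transversal conditions of Lemma~\ref{LeftTrans-Lem}. Since $K=\{e\}$, we have $S^{-1}*K=S^{-1}$, so the conditions become:
\begin{itemize}
\item[(a)] $R\cup\{1\}$ is a left transversal of $H$ in $D_{2n}\setminus S^{-1}$;
\item[(b)] $L\cup\{1\}$ is a left transversal of $\{e\}$ in $D_{2n}\setminus S*H$, i.e.\ $L=D_{2n}\setminus(\{1\}\cup SH)$ with $SH=S*H$.
\end{itemize}
In (a) the set $D_{2n}\setminus S^{-1}$ must be a union of left cosets of $H$. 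The left cosets of $H$ are $a^iH=\{a^i,a^ib\}$ for $0\le i\le n-1$, so I choose $S^{-1}$ to be a full coset.

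I would take $S^{-1}=aH=\{a,ab\}$, that is,
$$S=\{a^{-1},\,ab\}.$$
For this choice I check the membership conditions for $S\in\mathcal S$ in turn.
\begin{itemize}
\item $S\cap KH=S\cap H=\emptyset$, so Claim~\ref{SInterKH} is respected.
\item $S^{-1}K=S^{-1}*K$ holds trivially.
\item $SH=a^{-1}H\cup abH=a^{-1}H\cup aH=\{a,ab,a^{-1},a^{-1}b\}$ is a disjoint union, because $aH\ne a^{-1}H$ when $n\ge 3$. So $SH=S*H$.
\end{itemize}

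Next I define
$$L=D_{2n}\setminus\{1,a,a^{-1},ab,a^{-1}b\}.$$
Then $L$ avoids $1$, and it is inverse-closed: every reflection is an involution, and the removed rotations $\{1,a,a^{-1}\}$ form an inverse-closed set, so the remaining rotations are closed under inversion as well. Condition (b) then holds by construction.

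For (a), the set $D_{2n}\setminus S^{-1}$ is the union of the cosets $a^iH$ with $i\ne 1$. From each coset with $i\ne 0,1$ I pick the reflection $a^ib$ as its representative, and the coset $H$ itself is represented by $1$. So I set
$$R=\{a^ib~|~2\le i\le n-1\}.$$
This $R$ is inverse-closed because its elements are involutions, and it does not contain $1$. Hence $R\cup\{1\}$ is a left transversal of $H$ in $D_{2n}\setminus S^{-1}$, and Lemma~\ref{LeftTrans-Lem} gives the result.

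The main obstacle is reconciling inverse-closedness of $R$ and $L$ with the coset structure. A naive choice such as $S=\{a\}$ makes $L$ fail to be inverse-closed. Taking rotations as the representatives in $R$ also fails: $a^{-1}$ would be forced into $R$ while $a$ is excluded. Choosing reflections as representatives and taking $S^{-1}$ to be a whole $H$-coset avoids both problems. As a fallback, $S=\emptyset$ also works, with $R=\{a^i~|~1\le i\le n-1\}$ and $L=D_{2n}\setminus\{1\}$, but that graph is disconnected, so I would present the construction above, which has $S\ne\emptyset$.
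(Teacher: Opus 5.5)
Your proposal is correct and is essentially the paper's proof. The paper takes $S=\{a,a^{n-1}b\}$ (so $S^{-1}=a^{n-1}H$ is a full coset), $R=\{a^ib\mid 1\le i\le n-2\}$, and the same $L=D_{2n}\setminus\{1,a,a^{-1},ab,a^{-1}b\}$; your $S$ and $R$ are exactly the images of these under conjugation by $b$ (i.e.\ $a\mapsto a^{-1}$, $b\mapsto b$), which fixes $H$, $K$ and $L$, and your check via Lemma~\ref{LeftTrans-Lem} simply spells out the verification of \eqref{Construction} that the paper leaves to the reader.
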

\begin{proof}
  Set $S=\{a, a^{n-1}b\}$, $R=\{a^ib~|~1\leq i \leq n-2\}$, and $L=D_{2n}\setminus\{1, a, a^{n-1}, ab, a^{n-1}b\}$.  It is straightforward to verify that \eqref{Construction} holds.
\end{proof}
Proposition \ref{Di-Pro1} shows that Proposition \ref{AbeliSEmpty} does not extend to nonabelian groups.
\begin{pro}
Let $H=\{1,b\}$ and $K=\{1, a^{n/2}b\}$ be two subgroups of $D_{2n}$, where $n$ is an even number. Then, $H_0\cup K_1$ is a subgroup perfect code of $D_{2n}$.
\end{pro}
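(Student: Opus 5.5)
We prove the statement with an explicit construction: we choose $S$, $R$ and $L$ and check \eqref{Construction} directly, in the same way as in Proposition~\ref{Di-Pro1}. The general criterion of Theorem~\ref{Perfect-Property} is not available here, because neither $H=\{1,b\}$ nor $K=\{1,a^{n/2}b\}$ is normal in $D_{2n}$. Instead we use Lemma~\ref{LeftTrans-Lem} in the form \eqref{Construction}. First, $KH=\{1,b,a^{n/2}b,a^{n/2}\}$, so by Claim~\ref{SInterKH} any admissible $S$ must avoid these four elements.

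The simplest choice is $S=\emptyset$, which certainly lies in $\mathcal{S}$. Then \eqref{Construction} asks for an inverse-closed $R$ with $1\notin R$ such that $R\cup\{1\}$ is a left transversal of $H$ in $D_{2n}$, and similarly an $L$ for $K$. The left cosets of $H$ are $a^iH=\{a^i,a^ib\}$ for $0\le i\le n-1$. We take
$$R=\{a^ib\mid 1\le i\le n-1\}.$$
Every reflection is an involution, so $R=R^{-1}$, and $R\cup\{1\}$ meets each coset $a^iH$ exactly once.

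For $K$ the left cosets are $a^iK=\{a^i,a^{i+n/2}b\}$; this uses that $n$ is even, so that $a^{n/2}b$ is a reflection and $K$ is a subgroup. We take
$$L=\{a^{i+n/2}b\mid 1\le i\le n-1\},$$
which again consists of involutions. Hence $L=L^{-1}$, $1\notin L$, and $L\cup\{1\}$ is a left transversal of $K$. Both equalities in \eqref{Construction} then hold, with disjoint unions, so Lemma~\ref{LeftTrans-Lem} shows that $H_0\cup K_1$ is a perfect code of $\BC(D_{2n};R,L,\emptyset)$.

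The main obstacle comes if one also wants the bi-Cayley graph to be connected, in the spirit of Proposition~\ref{Di-Pro1}. In that case $S$ must be nonempty. The first choice to try is $S=\{a\}$, so that $S^{-1}*K=\{a^{n-1},a^{n/2-1}b\}$ and $S*H=\{a,ab\}$; these are genuinely disjoint unions, and $S\cap KH=\emptyset$ as soon as $n\ge4$. The delicate part is keeping $R$ inverse-closed while it picks representatives of the partially removed cosets:
\begin{itemize}
\item $R$ is forced to contain $a^{n/2-1}$, hence also its inverse $a^{n/2+1}$, which must not itself be removed;
\item this works for $n\ge 6$ but breaks down for $n=4$, where that inverse is $a^{3}$, which lies in the removed set $S^{-1}*K$.
\end{itemize}
The analogous bookkeeping is needed for $L$ on the cosets of $K$. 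So the disconnected construction above proves the statement as written, and the connected refinement needs a case check (with a separate choice of $S$ when $n=4$).
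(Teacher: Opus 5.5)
Your proof is correct, but it takes a different route from the paper's. You take $S=\emptyset$, so \eqref{Construction} splits into two separate Cayley-graph conditions. You meet them with transversals made of reflections, $R=\{a^ib\mid 1\le i\le n-1\}$ and $L=\{a^{i+n/2}b\mid 1\le i\le n-1\}$. These are inverse-closed because reflections are involutions, and each meets every coset $a^iH$ (respectively $a^iK$) with $i\neq 0$ exactly once, so the verification is complete.

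The paper instead takes $S=\langle a\rangle\setminus\{1,a^{n/2}\}$, $R=\{a^{n/2}b\}$ and $L=\{a^{n/2}\}$. Then $S^{-1}*K=S*H=\{a^i,a^ib\mid i\neq 0,n/2\}$ is simultaneously a union of left cosets of $H$ and of $K$, and the singletons $R$, $L$ cover the one leftover coset on each side.

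Your version is shorter and fully proves the statement as the paper defines it: a perfect code in \emph{some} bi-Cayley graph, with no connectivity requirement. However, it only shows that $H$ and $K$ are each perfect codes of some Cayley graph on $D_{2n}$, so nothing specific to bi-Cayley graphs is used. The paper's choice has $S\neq\emptyset$, which is the point of the surrounding discussion (compare Proposition~\ref{AbeliSEmpty} with Proposition~\ref{Di-Pro1}). Its graph is connected for $n\ge 6$. For $n=4$ it splits into the components $N_0\cup(aN)_1$ and $(aN)_0\cup N_1$, where $N=\langle a^2,b\rangle$.

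Your closing remark about a connected refinement is mistaken, although it plays no role in the proof. With $S=\{a\}$ there is no admissible $R$ for any $n$, not only for $n=4$. In \eqref{Construction} the sets $R*H$ and $H$ are unions of whole left cosets $\{a^i,a^ib\}$ of $H$. So $S^{-1}*K=\{a^{n-1},a^{n/2-1}b\}$ would also have to be such a union, and it is not. Concretely, $a^{n/2-1}\notin H\cup S^{-1}*K$, so it must lie in $R*H$. But then $a^{n/2-1}H\subseteq R*H$ contains $a^{n/2-1}b\in S^{-1}*K$, contradicting disjointness. You cannot ``pick representatives of partially removed cosets.'' The same obstruction blocks $L$, since $S*H=\{a,ab\}$ is not a left coset $\{a^i,a^{i+n/2}b\}$ of $K$. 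Membership $S\in\mathcal{S}$ is therefore not enough. One also needs $S^{-1}K$ to be a union of left cosets of $H$ and $SH$ a union of left cosets of $K$, which is exactly what the paper's choice of $S$ provides.
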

\begin{proof}
   We let $S=\langle a \rangle \setminus\{1, a^{n/2}\}$, $R=\{a^{n/2}b\}$, and $L=\{a^{n/2}\}$.  It is easy to verify that \eqref{Construction} holds.
\end{proof}

\section{Concluding Remarks}\label{Conclude}
In this paper, we extended the equivalent condition known for subgroup perfect codes in Cayley graphs to the setting of bi-Cayley graphs. Based on this condition, we studied subgroup perfect codes of the form $H_0\cup K_1$ over a finite group $G$, considering both the general case and the case where $G$ is abelian. We also obtained two families of perfect codes in dihedral groups, which provide examples in the nonabelian setting.

There are several interesting directions for future research. The equivalent condition obtained in this paper does not take into account the structural properties of the corresponding bi-Cayley graph, such as connectivity. It would therefore be interesting to investigate how such graph-theoretic properties can be incorporated into the characterization of subgroup perfect codes. More generally, subgroup perfect codes in nonabelian groups remain a relatively unexplored topic and deserve further study. Finally, a complete classification of subgroup perfect codes of the form $H_0\cup K_1$, where $H\neq K$, in connected bi-Cayley graphs over $D_{2n}$ remains an open problem.

  \section*{Acknowledgement}
  The first author was supported by  Alzahra University, Tehran, Iran.

\end{document}